\documentclass[12pt]{article}
\usepackage{fullpage}
\usepackage{epsf,epsfig,amsfonts,amsgen,amsmath,amstext,amsbsy,amsopn,amsthm,amssymb,epstopdf,tikz,mathrsfs}
\usepackage{ebezier,eepic,mathtools,dsfont,lmodern}
\usepackage{textcomp}
\usepackage{color,colordvi}
\usepackage{multirow}
\usepackage{url}
\usepackage{graphicx}
\usepackage{epsf}
\usepackage[format=hang, margin=10pt]{caption}
\usepackage{lmodern}
\newtheorem{theorem}{Theorem}[section]
\newtheorem{proposition}[theorem]{Proposition}

\newtheorem{conjecture}[theorem]{Conjecture}
\newtheorem{lemma}[theorem]{Lemma}

\newtheorem{question}[theorem]{Question}

\theoremstyle{definition}

\newtheorem{example}[theorem]{Example}
\newtheorem{remark}[theorem]{Remark}

\newcommand{\normcycles}[1]{\left\lVert #1\right\rVert}
\newcommand{\expect}{\mathbb{E}} 
\newcommand{\var}{\mathbf{Var}} 
\newcommand{\convd}{\xrightarrow{d}} 
\newcommand{\normaldist}{\mathcal{N}} 

\newcommand{\Var}{\operatorname{Var}}
\newcommand{\E}{\mathbb{E}}

\newcommand{\graphseq}{\mathcal{G}} 

\def\dsum{\displaystyle\sum}

\begin{document}

\title{On the Asymptotic Normality and Unimodality of Genus Distributions of Wheels}

\author{Yichao Chen\thanks{School of Mathematics, SuZhou University of Science and Technology, Suzhou 215009, P.R. China. Email: chengraph@163.com. Partially supported by the NNSFC under Grant No. 12271392}\,
\qquad
Yan Yang\thanks{School of Mathematics and KL-AAGDM, Tianjin University, Tianjin 300354, P.R. China. Email: yanyang@tju.edu.cn. Supported by National Natural Science Foundation of China (No. 12371350).} \,}

\date{}

\maketitle

 \begin{abstract}
The genus polynomial of a graph is the generating polynomial for the
number of nonequivalent embeddings of the graph on each orientable surface. In this paper, we address three questions  on
genus polynomials for wheel graphs: the computation of genus polynomials, the unimodality and the asymptotic normality of their coefficients. We derive an explicit formula for the genus polynomial of wheel graphs by
combining methods of the joint tree model and characters theory, and then prove its real-rootedness. This stronger result implies the log-concavity,
unimodality, and asymptotic normality of its coefficients. Thus, we
confirm the unimodality conjecture for the genus distribution of wheel graphs and provide a positive answer to
the asymptotic normality question  posed by Zhang, Peng, and Chen
(\emph{Adv. in Appl. Math.} \textbf{127} (2021), 102175).

\medskip
\noindent {\bf Keywords:} asymptotic normality; unimodality; genus distribution; real-rootedness; wheel graphs.

\smallskip
\noindent {\bf Mathematics Subject Classification (2020):} 05C10, 05C31, 05A20, 20C30
\end{abstract}

\section{Introduction}
Graph embeddings on surfaces is a central topic in topological graph theory. A {\it surface} is a compact 2-dimensional manifold without boundary. In this paper, we only consider  the orientable surface, which is obtained from the sphere $\mathbb{S}_0$ by adding handles. If we add $g$ handles to $\mathbb{S}_0$, then we get the orientable surface of genus $g$, denoted by $\mathbb{S}_g$. A surface can be represented by a polygon of even number of edges in the plane, whose edges are pairwise identified and directed clockwise or counterclockwise such that each paired edges have opposite directions. Such polygonal representations of surfaces can be written by words as follows. For the
sphere $\mathbb{S}_0=a_0a_0^{-1}$ and
 $\mathbb{S}{g}=\prod\limits_{i=1}^{g}a_{i}b_{i}a_{i}^{-1}b_{i}^{-1}$
for the orientable surface of genus $g$~$(g\geq 1)$, where $a_i^{-}$, $b_i^-$ are with the opposite direction of $a_i$, $b_i$ on the boundary of the polygon.

An {\it embedding} of a graph $G$ on a surface $S$ is a homeomorphism $h: G\rightarrow S$ of $G$ on $S$ such that every component of
$S-h(G)$ is a 2-cell. Two embeddings $h: G\rightarrow S$ and $g: G\rightarrow S$ of a graph $G$ on a surface $S$ are said to be {\it equivalent} if there is an orientation-preserving homeomorphism $f: S\rightarrow S$ such that $f\circ h=g.$

Let $g_{k}(G)$ be the number of equivalence classes of embeddings of a graph $G$ on the orientable surface of the genus $k$. The sequence $g_0(G),~g_1(G),~g_2(G),\cdots$
is called the {\it genus distribution} of $G$.  And  the polynomial $$\Gamma_{G}(x)=\sum\limits_{k\geq 0}g_{k}(G)x^{k}$$ is called the {\it genus polynomial} of $G$.

The theory of genus distributions is a long-standing topic in the study of graph embeddings, because it incorporates not only  structural properties of a graph, such as its minimum genus and maximum genus, but also enumeration properties of the graph on all surfaces.

Research on genus distributions has progressed along three main directions: \textbf{the computation of genus polynomials}, \textbf{the unimodality of genus distributions}, and \textbf{the asymptotic normality of genus distributions}. These three topics range from computational methods to the algebraic and probabilistic properties of it, and together they have profoundly shaped our understanding of graph embeddings. In this paper, we address all three questions in the context of wheel graphs.

For $n\geq 3$, the {\it wheel graph} of $n$ spokes is the graph $W_{n}$ obtained from the cycle $C_{n}$ by adding a new vertex and joining it to all vertices of $C_{n}$.  Wheel graphs are fundamental objects in graph theory due to their important role in the study of 3-connected graphs~\cite{Tutte1961}. Many polynomial functions associated with wheel graphs have also been studied, such as,
Lu, Xie and Yang~\cite{LXY2022} obtained explicit formulas for Kazhdan-Lusztig polynomials of wheel matroids and further proved that these polynomials have only negative real roots, etc.

Below we briefly introduce the three topics that this paper focuses on.

\subsection{Computation of genus polynomials}
Gross and Furst \cite{GrossFurst1987} inaugurated the problem of genus
distribution of graph embeddings on orientable surfaces when
they were studying the hierarchy for embedding invariants of a
graph. Since then, the genus polynomials of various families of graphs have been computed, for example, bouquets \cite{GRT89};
closed-end ladders and cobblestone paths \cite{FurstGrossStatman1989}; dipoles \cite{KL93}; Ringel ladders \cite{Tesar2000}; star-ladders \cite{Chen2012a}, etc.
In order to derive the genus polynomials of a broader range of graphs, the genus distributions under graph operations have also been studied, see \cite{Gross2011b, Khan2010, Poshni2012}, etc. Nevertheless, the computation of the genus polynomials for graphs remains a highly challenging problem.

\subsection{Unimodality of genus distributions}
A sequence $a_0,a_1,\dots,a_n$ of nonnegative real numbers is \emph{log-concave} if $$a_k^2 \ge a_{k-1}a_{k+1} ~~~\mbox{for}~~~ 1\le k\le n-1,$$ and is \emph{unimodal} if for some $0\le j\le n$ we have $$a_0\le a_1\le\cdots\le a_j\ge a_{j+1}\ge a_{j+2}\ge\cdots\ge a_n.$$
In combinatorics, log-concavity is regarded as a strong structural property, it implies unimodality and can arise from real-rootedness of the associated generating polynomial. Applied to genus distributions, Gross, Robbins, and Tucker~\cite{GRT89} proposed the following conjecture.

\begin{conjecture}[\cite{GRT89}]\label{c1:1}
The genus distribution of every graph is log-concave.
\end{conjecture}

Conjecture \ref{c1:1} has been verified for some classes of graphs, for example, bouquets~\cite{GRT89}, dipoles~\cite{KL93}, ring-like families~\cite{GMT14}, circular ladders~\cite{Chen15}, Ringel ladders \cite{Gross2015a}, some vertex and edge-amalgamations graphs~\cite{GMTW15}, etc.
One of the powerful tool for establishing log-concavity in this context is real-rootedness: if the genus polynomial $\Gamma_G(x)$ has only real roots, then its coefficients sequence is automatically log-concave.
In~\cite{Stahl}, Stahl conjectured the following stronger property of genus polynomials.

\begin{conjecture}[\cite{Stahl}]\label{c1:2}
The genus polynomial of every graph has only real roots
\end{conjecture}

Stahl \cite{Stahl} proved that Conjecture \ref{c1:2} holds for some classes of
graphs including bouquets, dipoles, and cobblestone paths, etc. Chen~\cite{Chen08}  verified  Conjecture \ref{c1:2} for all graphs
with maximum genus 2.  Gross, Mansour, Tucker and Wang~\cite{Gross2016} confirmed Conjecture \ref{c1:2} for iterated claws.

Conjecture \ref{c1:2} remained open until 2010, at which time Chen and Liu
\cite{ChenLiu10} found two cubic graphs, whose genus polynomials both have
non-real roots. Carr, Dhaliwal and Mohar identified more cubic graphs as counterexamples to Conjecture \ref{c1:2} in \cite{CarrDhaliwalMohar2025}. And Conjecture \ref{c1:1}, which had stood for more than three decades, was eventually disproved by Mohar \cite{Mohar2026}  in 2026.

Although neither Conjecture \ref{c1:1} nor Conjecture \ref{c1:2} holds for all graphs, it remains meaningful to investigate for which classes of graphs they do hold. Moreover,
as mentioned in \cite{Mohar2026}, the conjecture regarding the weaker property of unimodality is still open.

\begin{conjecture}\label{c1:3}
The genus distribution of every graph is unimodal.
\end{conjecture}

\subsection{Asymptotic normality of genus distributions}
As the order of a graph increases, it is natural to ask whether its genus
distribution approaches a standard probability distribution. Of particular
interest is whether the genus distribution is asymptotically normal.

Let $(X_n)_{n\geq 1}$ be a sequence of real-valued random variables, and let $X$ be a real-valued random variable with cumulative distribution
function $F_X$. We say that $X_n$ \emph{converges in distribution} to $X$,
denoted by $X_n \convd X,$ if $
\lim_{n\to\infty} F_{X_n}(x)=F_X(x)
$ for every continuity point $x\in R$ of $F_X$, where $F_{X_n}$ denotes the cumulative distribution
function of $X_n$. A sequence of real-valued random variables $(X_n)_{n\geq 1}$ is said to be
\emph{asymptotically normal} if
$$
\frac{X_n-\expect X_n}{\sqrt{\var X_n}}
\convd \normaldist(0,1)
\qquad\text{as } n\to\infty,$$ where $\normaldist(0,1)$ denotes the standard normal distribution. Let $\graphseq=(G_n)_{n\geq 1}$ be a sequence of graphs. We say that the genus distributions of $\graphseq$ are \emph{asymptotically normal} if the associated sequence of genus random variables $(g(G_n))_{n\geq 1}$ is asymptotically normal.

In 2021, Zhang, Peng, and Chen \cite{ZPC2021} proved that the genus distributions of an $H$-linear family of graphs with spiders are asymptotically normal. Zhang, Peng, and Zhang \cite{ZPZ2021} proved that  the Euler-genus distributions
of any ladder-like sequence of graphs are asymptotically normal.
In \cite{ZPC2021}, the authors asked the following question.

\begin{question}[\cite{ZPC2021}]\label{q1:4}
 Are the genus distributions of bouquets, dipoles and wheel graphs asymptotically normal?
\end{question}

For bouquets and dipoles , Chen and Fang \cite{ChenFang2022} provided an affirmative answer to Question \ref{q1:4}.

\subsection{Main results and organizations}
In this paper, an explicit formula for the genus polynomial of wheel graphs is derived, and its real-rootedness is proved. This stronger result implies the log-concavity, unimodality, and asymptotic normality of the genus distribution of wheel graphs. Thus, we confirm Conjecture \ref{c1:3} for wheel graphs and provide a positive answer to Question \ref{q1:4}.

We first clarify the notation. We use $\left[ {n \atop k} \right]$ for the signless Stirling numbers of the first kind, and use \(x^{\overline{n}}\) to denote the rising factorial $x(x+1)\cdots(x+n-1)$.

The main results of this paper are as follows.

\begin{theorem}
\label{thm:distribution}
The genus polynomial of the wheel graph $W_n$ $(n\geq 3)$
\begin{eqnarray*}
\Gamma_{W_n}(x) &=& \frac{4}{n(n+1)} \sum_{m=0}^{\lfloor n/2 \rfloor} \begin{bmatrix} n+1 \\ n-2m \end{bmatrix} x^m
+ \frac{2(-1)^n}{n} x^{(n+1)/2} \sum_{k=0}^{n-1} c_k \left(k - n + 1 - \frac{1}{\sqrt{x}}\right)^{\overline{n}},
\end{eqnarray*}
in which
\[
c_k =
\begin{cases}
2^{n-1} - 1, & k = 0, \\[4pt]
\displaystyle \sum_{j=k+1}^{n-1-k} \binom{n-1}{j}, & 1 \le k \le n-2, \\[4pt]
-\bigl(2^{n-1} - 1\bigr), & k = n-1.
\end{cases}
\]
\end{theorem}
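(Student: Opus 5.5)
The plan is to reduce the genus distribution of $W_n$ to a count of products of permutations in the symmetric group, and then to evaluate that count with character theory. I take the hub vertex $v_0$ and the rim cycle $C_n=v_1v_2\cdots v_n$. The joint tree model, with a spanning tree chosen as the star at the hub, gives a convenient description. Every rim vertex has degree $3$, so its rotation is one of two cyclic orders. The hub has degree $n$, so its rotation is one of $(n-1)!$ cyclic orders of the spokes. Hence $\Gamma_{W_n}(1)=2^n(n-1)!$. By Euler's formula $V-E+F=2-2g$, with $V=n+1$ and $E=2n$, an embedding has genus $g=(n+1-F)/2$. So it suffices to find the distribution of the number $F$ of faces, i.e.\ the number of cycles of the face permutation $\phi=\alpha\rho$ acting on the $4n$ darts.

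The key step is to contract the face-tracing walk onto the $n$ hub darts. Between two consecutive visits to the hub, a face travels out along a spoke, around part of the rim, and back along another spoke. The binary choice at each rim vertex decides whether the walk turns left or right there, and hence which spoke it returns on. Faces that never touch the hub are impossible, since every rim edge bounds faces that also meet spokes. After contraction, $F$ becomes the number of cycles of a product $\sigma\cdot\tau$ in $S_n$, or a closely related group. Here $\sigma$ ranges over all $n$-cycles (the hub rotation), and $\tau$ is a fixed permutation determined by the rim choices.

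I expect $\tau$ to depend on the rim choices only through one parameter. The natural candidate is the number $j$ of rim vertices of one orientation, so that $\tau$ is conjugate to a product of two cycles whose lengths depend on $j$. There should also be a degenerate family, when all rim rotations agree, where $\tau$ is itself an $n$-cycle, or an $(n+1)$-cycle after accounting for the rim face. This matches the weights $\binom{n-1}{j}$ and $2^{n-1}-1$ appearing in $c_k$.

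Next I count the cycle types of $\sigma\tau$ with $\sigma$ an $n$-cycle, using the Frobenius formula
$$\#\{\sigma\in C_{(n)}:\ \sigma\tau\in C_\lambda\}=\frac{|C_{(n)}||C_\lambda|}{n!}\sum_\chi \frac{\chi(n)\chi(\tau)\overline{\chi(\lambda)}}{\chi(1)}.$$
Only hook characters survive, since $\chi(n)\neq0$ only for hooks. Summing over $\lambda$ weighted by $y^{\ell(\lambda)}$ gives the classical Zagier/Stanley identity: the generating polynomial for the number of cycles of $\sigma\tau$, with $\sigma$ a random $n$-cycle, is a linear combination of shifted rising factorials $(y-k)^{\overline{n}}$. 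The coefficients are hook character values $\chi^{(n-k,1^k)}(\tau)$, which for $\tau$ a product of two cycles are explicit $\pm1$ or $0$ patterns in $k$. Summing these patterns against $\binom{n-1}{j}$ over $j$ produces the partial binomial sums $c_k$, supported on $j\in[k+1,n-1-k]$. Setting $y=x^{-1/2}$ and multiplying by $x^{(n+1)/2}$ converts the number of faces into genus. This gives the second term, with the sign $(-1)^n$ coming from reversing $(y-k)^{\overline n}$ into $(k-n+1-y)^{\overline n}$.

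The degenerate family, where $\tau$ is a full cycle, gives the Zagier formula for products of two long cycles, which is known to equal $\frac{2}{n(n+1)}$ times the Stirling numbers of odd or even index. This yields the first sum with $\left[{n+1\atop n-2m}\right]$, the factor $4$ accounting for the two orientations. The main obstacle is the contraction step: pinning down exactly which $\tau$ arises for each rim configuration, and handling boundary effects at the spokes adjacent to the hub's starting dart. I would verify the result first by brute force for $n=3,4$, using $\Gamma_{W_3}=\Gamma_{K_4}=2+14x$, before fixing the conventions.
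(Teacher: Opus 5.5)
Your plan works, and it takes a genuinely different route from the paper in the combinatorial half; the character-theoretic half is the same. The paper never traces faces. It uses the joint tree model with the Hamiltonian path $v_1v_2\cdots v_nv_0$ as spanning tree and normalizes the associated surfaces with TT1/TT2 to obtain Lemma~\ref{le2:1}. That lemma writes $\Gamma_{W_n}$ in terms of the dipole $D_n$ and the tripolar graphs $D_{i+1,n-1-i}$, with a genus shift of one. Only then does the paper apply the Frobenius formula and hook characters to $D_{i,n-i}$, whose embeddings are exactly an $n$-cycle times a permutation of type $(i,n-i)$, and separately to $D_n$. Your first-return contraction onto the hub darts reaches the same permutation products directly, with no surface normalization and no auxiliary graphs. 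It also explains Lemma~\ref{le2:1} conceptually. It yields the Stirling term as the all-rotations-agree case of the same hook formula, since $\frac1n\sum_{k}(y-k)^{\overline{n}}$ telescopes, so you do not need to cite Zagier separately. The paper's detour is offered mainly as a template for combining joint trees with characters.

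The step you call the main obstacle is short and has no boundary effects. Write $r_i=v_iv_{i+1}$ and call $v_i$ positive if its rotation is $(s_i,r_i,r_{i-1})$. A walk entering $v_i$ along $s_i$ does one of two things. If $v_i$ is positive, it runs forward through consecutive negative vertices to the next positive vertex. Otherwise it runs backward through positive vertices to the previous negative vertex. Either way it then leaves along that vertex's spoke. So with $p$ positive vertices, $\tau$ is a $p$-cycle times an $(n-p)$-cycle. Only this cycle type matters, because $\sigma$ runs over all of $C_{(n)}$.

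Three statements in your sketch need fixing.

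First, hub-avoiding faces do exist. When all rim rotations agree, the rim cycle bounds exactly one extra face. So $F=c(\sigma\tau)+1$ with $\tau$ an $n$-cycle, not an $(n+1)$-cycle. This extra factor $y$ is what turns $y^{n-2m}$ into $x^m$.

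Second, the weights are $\binom np$, not $\binom{n-1}{j}$. Since $(-1)^k\chi^{(n-k,1^k)}(\tau)=[k\le n-1-p]-[k\ge p]$, the coefficient of $(y-k)^{\overline n}$ is $\frac1n\bigl(\sum_{p=1}^{n-1-k}\binom np-\sum_{p=1}^{k}\binom np\bigr)$. Pascal's rule together with $\binom{n-1}{j}=\binom{n-1}{n-1-j}$ turns this into $\frac2n c_k$. Regrouping the same sum gives weight $2\binom{n-1}{i}$ on type $(i+1,n-1-i)$, which is precisely the decomposition in Lemma~\ref{le2:1}.

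Third, your coefficients satisfy $c_{n-1-k}=-c_k$. So for $k>(n-2)/2$ you must read the statement's $\sum_{j=k+1}^{n-1-k}$ as $-\sum_{j=n-k}^{k}$. For example, $n=4$ needs $c_2=-3$; the empty-sum value $0$ would create a term in $x^{1/2}$. This is also the reading presupposed by the identity $c_k-c_{k-1}=-\binom nk$ for all $1\le k\le n-1$ used in the proof of Lemma~\ref{le:shift}.
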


\begin{theorem}
\label{thm:main}
The genus polynomial of the wheel graph $W_n$ $(n\geq 3)$ has only
negative real roots. Consequently, the genus distribution of \(W_n\) is log-concave and unimodal.
\end{theorem}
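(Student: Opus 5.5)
The plan is to use the explicit formula of Theorem~\ref{thm:distribution} and change variables so that the genus polynomial becomes one parity part of a combination of rising factorials. Then I would locate the zeros by a sign-change count. Put $t=1/\sqrt{x}$ and $F(t)=t^{\overline{n+1}}=\sum_k \left[ {n+1 \atop k} \right] t^k$. Since $\sum_m \left[ {n+1 \atop n-2m} \right] t^{n-2m}$ is the part of $F$ of the same parity as $n$, the first sum equals
\[
x^{n/2}\cdot \tfrac12\bigl(F(t)+(-1)^nF(-t)\bigr).
\]
Similarly, the second term is $x^{n/2}\sqrt{x}\sum_k c_k(k-n+1-t)^{\overline{n}}$. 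Hence
\[
\Gamma_{W_n}(x)=x^{n/2}\,G(t)
\]
for an explicit function $G$. Because $\Gamma_{W_n}$ is a genuine polynomial, $G$ should be $t^{-1}$ times a polynomial of definite parity; I would verify this parity using the antisymmetry $c_{n-1-k}=-c_k$, which the case definition of $c_k$ displays. The relation $\sum_j\binom{n-1}{j}=2^{n-1}$ should also make the low-order terms cancel, so that $\deg\Gamma_{W_n}=\lfloor n/2\rfloor$, the maximum genus of $W_n$ (Betti number $n$).

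Negative roots $x=-u^2$ correspond to purely imaginary $t=-i/u$. So I would set $t=is$ and study the real polynomial $H(s)$ obtained from $G(is)$ after removing the trivial power of $s$. The claim becomes: $H$ has $\lfloor n/2\rfloor$ distinct positive real zeros in $s^2$, which is the right count since the degree is $\lfloor n/2\rfloor$ in $x=-1/s^2$. Each term $(k-n+1-is)^{\overline n}=\prod_{j=k-n+1}^{k}(j-is)$ has a modulus and argument that are explicit in $s$. In the same way, $F(is)\pm F(-is)$ is $2\operatorname{Re}$ or $2i\operatorname{Im}$ of $\prod_{j=0}^{n}(j+is)$.

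The first approach I would try is a Hermite--Biehler argument. Write $H$ as the real (or imaginary) part of $P(s)=\sum_k w_k\prod_j(j+a_k+is)$ and show that $P$ has all its zeros in one open half-plane. Then its real and imaginary parts have only real, interlacing zeros. The weights $c_k$ are positive for $k<(n-1)/2$ and negative for the mirror indices, so I would first pair $k$ with $n-1-k$ and rewrite each pair as a product with a fixed zero-free factor. If that fails, the fallback is a direct intermediate-value count. Track the argument $\theta(s)=\sum_j\arctan(s/j)$ of the dominant factor $\prod_{j\le n}(j+is)$, and choose $\lfloor n/2\rfloor+1$ values $s_0<s_1<\cdots$ where $\theta(s_\ell)$ passes multiples of $\pi/2$. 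At these points I would show that $H(s_\ell)$ alternates in sign, using the bound $|c_k|\le 2^{n-1}-1$ to compare the correction terms with the main term.

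The main obstacle is exactly this sign-alternation or half-plane step. The correction sum carries the large weights $2^{n-1}$, while the main term carries the factor $4/(n(n+1))$, so a naive dominance estimate will not work. The cancellation coming from the antisymmetry of $c_k$ must be exploited exactly, likely by rewriting the weighted sum of shifted rising factorials as a telescoping or finite-difference expression in a single polynomial. Once the real-rootedness is established, every root is negative, because all coefficients $g_k(W_n)$ are positive up to degree $\lfloor n/2\rfloor$ (the genus range of $W_n$ is an interval). Log-concavity then follows from Newton's inequalities, and unimodality follows from log-concavity of a positive sequence with no internal zeros.
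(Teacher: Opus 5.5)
\textbf{There is a genuine gap: the real-rootedness step is never proved.} Your reformulation is sound. With $t=1/\sqrt{x}$ the Stirling part is $x^{n/2}\cdot\frac12\bigl(F(t)+(-1)^nF(-t)\bigr)$, the antisymmetry $c_{n-1-k}=-c_k$ gives the parity, and the theorem is equivalent to all zeros of $Q_n(t)=\frac12t^{n+1}\Gamma_{W_n}(t^{-2})=\frac12tG(t)$ lying on the imaginary axis. But that is exactly the step you leave open, and it is the whole content of the theorem. The Hermite--Biehler route needs an explicit $P$ with all zeros in one half-plane, and none is exhibited. The sign-alternation route, in the decomposition you set up, fails for the reason you yourself give: the Stirling part with weight $\frac{4}{n(n+1)}$ is your main term, while the $c_k$-terms with $|c_k|$ up to $2^{n-1}-1$ are the corrections. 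As written, the proposal lists strategies but contains no argument. Your closing deductions (negativity, Newton's inequalities, unimodality) are correct but conditional on this step.

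\textbf{The missing idea.} It is the recombination you only guess at in your last paragraph. Write $Ef(t)=f(t-1)$ and $F_n(t)=t^{\overline{n+1}}$. Then $(E^k-E^{k+1})F_n=(n+1)\prod_{j=k-n+1}^{k}(t-j)$, and $c_k-c_{k-1}=-\binom nk$. Summation by parts, together with $t(1-E^n)F_n=\bigl(1+(n+2)\sum_{k=1}^{n-1}E^k+E^n\bigr)F_n$ for the Stirling part, gives
\[
n(n+1)Q_n(t)=2^{n-1}\bigl(F_n(t)+F_n(t-n)\bigr)+\sum_{k=1}^{n-1}\Bigl(n+2-\binom nk\Bigr)F_n(t-k).
\]
In this form the end weights dominate: $\sum_{k=1}^{n-1}\bigl|n+2-\binom nk\bigr|=2^n-n^2-n+8<2^n$.

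\textbf{How the paper finishes.} It reads this inequality as the Lakatos--Losonczi condition, so the self-reciprocal polynomial $h_n(z)$ with these coefficients has all its zeros on the unit circle. It then applies Rodriguez Villegas' theorem to $h_n(z)/(1-z)^{n+2}=\sum_m R_n(m)z^m$, where $R_n(m)=Q_n(m+1)/(n+1)!$. This puts the zeros of $Q_n$ on $\Re t=0$.

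\textbf{Your own count would also work once you have this identity.} Let $\theta=\arg F_n(is)$. Since $F_n(is-n)=(-1)^{n+1}\overline{F_n(is)}$, the end terms equal $2^n|F_n(is)|\cos\theta$ for $n$ odd, or $2^n i|F_n(is)|\sin\theta$ for $n$ even. Also $|F_n(is-k)|<|F_n(is)|$ for real $s\neq0$ and $1\le k\le n-1$. So at the points where the trigonometric factor equals $\pm1$, supplemented by $s\to\infty$ (and by $s\to0^+$ when $n$ is even), the end terms dominate and the sign of $Q_n(is)$ alternates. This yields $\lfloor n/2\rfloor$ distinct positive zeros in $s$, which accounts for all roots of $\Gamma_{W_n}$. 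Without the recombination, however, neither argument gets off the ground.
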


\begin{theorem}
\label{thm:normal}
The genus distributions of \((W_n)_{n\geq3}\) are  asymptotically normal with mean
$\frac{n}{2}$ and variance $\frac14\ln n.$
\end{theorem}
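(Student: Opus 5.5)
The plan is to deduce Theorem~\ref{thm:normal} from Theorem~\ref{thm:main} via the classical Harper/Bender criterion. If a polynomial $P_n(x)=\sum_k a_k x^k$ with nonnegative coefficients has only real roots, then the associated random variable $X_n$ with $\Pr(X_n=k)=a_k/P_n(1)$ is a sum of independent Bernoulli variables. Indeed, writing $P_n(x)=C\prod_i (x+r_i)$ with $r_i\ge 0$, we get $X_n=\sum_i B_i$ with $B_i\sim\mathrm{Bernoulli}(1/(1+r_i))$. Such a sum is asymptotically normal, by the Lindeberg--Feller central limit theorem, as soon as $\Var X_n\to\infty$. Hence the whole proof reduces to computing the mean and variance of the genus distribution of $W_n$ and showing that the variance tends to infinity.

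\textbf{Step 1: mean and variance from Theorem~\ref{thm:distribution}.} Write $\Gamma_{W_n}(x)=A_n(x)+B_n(x)$, where $A_n$ is the Stirling part and $B_n$ the second part. The total number of embeddings is $\Gamma_{W_n}(1)=\prod_v(\deg v-1)!=(n-1)!\,2^n$. I would first check that $B_n(1)$ is negligible, or exactly cancels as needed, compared with $A_n(1)=\frac{4}{n(n+1)}\cdot\frac{(n+1)!}{2}$ (the even-index half of $\sum_j \left[{n+1\atop j}\right]$). The term $B_n$ contains rising factorials $(k-n+1-x^{-1/2})^{\overline{n}}$. At $x$ near $1$ their arguments pass through negative integers, so many vanish or are small relative to $n!\,2^n/n$. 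I would make this precise by differentiating at $x=1$ and bounding $|B_n'(1)|$ and $|B_n''(1)|$ by $O(n^{C}2^n(n-2)!)$ or a similar quantity, which is $o(\Gamma(1))$.

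\textbf{Step 2: the Stirling part.} Recall $\sum_j \left[{N\atop j}\right]y^j=y^{\overline N}$. The map $j=n-2m$ picks out the terms with $j\equiv n+1-1 \pmod 2$, and sets $x=y^{-2}$ up to a power shift. So $A_n(x)$ is essentially $\tfrac12 x^{n/2}\bigl[(x^{-1/2})^{\overline{n+1}}\pm(-x^{-1/2})^{\overline{n+1}}\bigr]$ times $\frac{4}{n(n+1)}$. The second parity term has $(-y)^{\overline{n+1}}$, which vanishes or is tiny near $y=1$. The mean and variance therefore come from $y^{\overline{n+1}}$. This is the distribution of the number of cycles of a random permutation of $n+1$ elements, with mean $H_{n+1}$ and variance $H_{n+1}-H^{(2)}_{n+1}\sim\ln n$. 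Since $m=(n-j)/2$, the genus has mean $(n-H_{n+1}+O(1))/2\sim n/2$ and variance $\tfrac14(\ln n+O(1))\sim\tfrac14\ln n$. This diverges, so the criterion applies.

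\textbf{Main obstacle.} The main obstacle is Step 1: controlling the correction $B_n$ and the parity-alternating term uniformly in the first two moments, so that they change the mean and variance only by $o(\sqrt{\ln n})$ and $o(\ln n)$ respectively. To do this, I would write both as $\Gamma(1)$ times relative errors. I would bound them using $|(k-n+1-x^{-1/2})^{\overline n}|\le n!$ near $x=1$ together with $\sum_k|c_k|\le n2^{n-1}$. This gives a relative error $O(n^2 2^n n!/(n\cdot n!\,2^n))$, which is too crude. So I would instead exploit the fact that at $x=1$ each rising factorial equals $(k-n)^{\overline n}=0$ for $0\le k\le n-1$. Derivatives then produce only a product with one factor removed, of size about $k!(n-1-k)!$. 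Weighted by $\binom{n-1}{j}$-type coefficients, this sum is $O(2^n (n-1)!/n)$-ish, a lower-order term. Once these bounds are in place, the Bernoulli-sum central limit theorem gives the stated asymptotic normality with mean $n/2$ and variance $\tfrac14\ln n$.
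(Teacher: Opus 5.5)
Your reduction matches the paper's. Real-rootedness from Theorem~\ref{thm:main} makes $\gamma_n$ a sum of independent Bernoulli variables, so (by Bender's theorem, or Lindeberg--Feller) everything rests on the moment asymptotics. The gap is in how you get the moments: Step~1's premise that $B_n$ is a small correction to the Stirling part is false, and in fact the reverse holds. Your own numbers show it. $A_n(1)=\frac{4}{n(n+1)}\cdot\frac{(n+1)!}{2}=2(n-1)!$, while $\Gamma_{W_n}(1)=2^n(n-1)!$, so $B_n(1)=(2^n-2)(n-1)!$. The Stirling part therefore accounts for only a $2^{1-n}$ fraction of all embeddings; already for $W_3$ one has $A_3(x)=2+2x$ and $B_3(x)=12x$. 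The error is the claim that $(k-n)^{\overline n}=0$ for every $0\le k\le n-1$. For $k=0$ one gets $(-n)^{\overline n}=(-1)^n n!\neq 0$, and it is multiplied by the largest coefficient $c_0=2^{n-1}-1$. So no bound like $B_n'(1)=O(2^n(n-1)!/n)$ can hold; in fact $B_n'(1)\approx\Gamma_{W_n}'(1)\approx\frac n2\,2^n(n-1)!$. Step~2 thus computes the mean and variance of an exponentially small piece of the distribution. That it reproduces $\frac n2$ and $\frac14\ln n$ is a coincidence of this formula, not a consequence of your argument.

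The repair is to take the main term from $B_n$. Pass to $Q_n(t)=\frac12t^{n+1}\Gamma_{W_n}(t^{-2})$; normalized, it is the probability generating function of $Y_n=n+1-2\gamma_n$. Since $(1-n-t)^{\overline n}=(-1)^n t^{\overline n}$, the $k=0$ term of $B_n$ becomes $\frac{c_0}{n}t^{\overline n}$, a cycle-count generating function. For $1\le k\le n-1$ the terms become $\frac{c_k}{n}P_{n,k}(t)$. These vanish at $t=1$ and satisfy $|P_{n,k}'(1)|=(k-1)!(n-k)!$ and $|P_{n,k}''(1)|\le 2H_n(k-1)!(n-k)!$. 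Using $|c_k|\le 2^{n-1}$ and $(k-1)!(n-k)!/n!=1/(n\binom{n-1}{k-1})$, they shift $Q_n'(1)/Q_n(1)$ by only $O(1/n)$ and $Q_n''(1)/Q_n(1)$ by only $O(\log n/n)$. The Stirling part is then the genuinely negligible piece. Note that the smallness comes from the factorials, not from the coefficients $c_k$, which are of order $2^{n-1}$ for small $k$.

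The paper does this more cleanly with the shift-operator identity $Q_n(t)=\sum_{k=0}^n c_{n,k}F_n(t-k)$, where $F_n(t)=t^{\overline{n+1}}$. Only $k=0$ survives at $t=1$, giving $Q_n(1)=2^{n-1}(n-1)!$. The other terms perturb the first two normalized derivatives by $O(\log n/n^2)$. Hence $\E[Y_n]=H_{n+1}+o(1)$ and $\Var(Y_n)=H_{n+1}-H^{(2)}_{n+1}+o(1)$, which give $\E[\gamma_n]\sim\frac n2$ and $\Var(\gamma_n)\sim\frac14\ln n$.
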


The rest of this paper is organized as follows.
In Section \ref{Sec:2}, we prove Theorem \ref{thm:distribution}.
In Section \ref{Sec:3}, we prove Theorem \ref{thm:main}, and in Section \ref{Sec:4}, we prove Theorem \ref{thm:normal}.

\section{Genus Polynomials of Wheels}\label{Sec:2}
In this section, we will give a new derivation of the genus distribution of wheel graphs. Within this framework, we first use the joint tree model to establish connections among the genus distribution of wheel graphs and those of dipoles and tripolar graphs. Then, we compute  the genus distributions of tripolar graphs and dipoles by means of the character method, and consequently obtain the genus distribution of wheel graphs.

\subsection{Joint tree model}
When we describe an embedding of a graph on a surface, the most popular embedding model announced by Edmonds in 1960 is based on the rotation system at vertices. See e.g., Sections $3.2$ and $3.3$ in \cite{MT01} for detail. About two decades ago, another embedding model, called the joint tree model, was introduced by Liu \cite{liu08}. The basic idea is to transfer the rotation system into a cyclically ordered string of letters, which is called an associate surface. Many results about the genus, average genus and genus distribution of a graph, such as \cite{Chen10,SL08,SL10a,SL10,WL08,YL07} etc, have been obtained by using the joint tree model.

In the joint model, we do the following steps to get a joint tree of $G$ which is corresponding to an embedding of $G$ on a surface.
\\{\bf Step 1.}  Choose a spanning tree $T$ of a graph $G$ arbitrarily. Then the edge set $E$ of $G$ can be
partitioned into $E_{T}$ (tree edge) and $\overline{E}_{T}$ (cotree
edge).
\\{\bf Step 2.} For $1\leq i\leq \beta$,
we split each cotree edge $e_{i}\in \overline{E}_{T}$ into two semi-edges and label them by the same letter as $a_{i}$ where $\beta$ is the betti
number of $G$. Then we get a tree with $|V(G)|+2\beta$ vertices in which $2\beta$ of them are leaves.
\\{\bf Step 3.} Index the $2\beta$ semi-edges by $+$ (always omitted) or $-$ such that the indices of each
pair of semi-edges labelled with same letter are distinct.
\\{\bf Step 4.} A {\it rotation} at a vertex $v,$ denoted by $\sigma_{v},$  is a
cyclic permutation of semi-edges incident with $v$. Let $\sigma_{G}=\prod_{v\in V(G)}\sigma_{v}$  be a rotation system of
$G$. The tree with an index of each semi-edge and a rotation system of  $G$  is called a {\it joint tree} of $G$.

By reading these lettered
semi-edges with indices of a joint tree in a
fixed orientation (clockwise or counterclockwise), we can get an
algebraic representation for a surface. It is a cyclic order of
$2\beta$ letters with indices. This surface is called an {\it
associated surface} of $G$.

And there is a 1-to-1 correspondence between associate
surfaces and embeddings of a graph, hence an embedding of a graph
on a surface can be represented by an associate surface of it.

 After we obtain an associated surface, we aim to determine its genus, i.e., the genus of the surface on which this graph is embedded. At this stage, the following topological equivalences $\sim$ on surfaces will be useful.
\\TT 1: $Aaa^{-}B\sim AB$ where $a\notin AB$,
\\TT 2: $AxByCx^{-}Dy^{-}\sim ADCBxyx^{-}y^{-}.$\vspace{1mm}
 \\In TT 1 and TT 2, $A,B,C$ and $D$ are all linearly ordered letters and allowed to be empty. In \cite{Ringel74}, TT 1 is called simple normalizations and TT 2 is called handle normalization.

\subsection{The associate surfaces of $D_n$, $D_{i,j}$ and $W_n$} \label{subsec2:2}
The dipole $D_{n}$ consists of two vertices joined by $n$ edges.
The tripolar graph $D_{i,j}$ is the graph whose vertex set consists of three vertices, say $v_0, v_1, v_2$, and there are $i$ edges joining $v_1$ to $v_0$ and $j$ edges joining $v_2$ to $v_0$. So $D_{i,j}$ can be seen as a vertex-amalgamation of $D_i$ and $D_j$ at a vertex.
In the following, we will show the joint trees and forms of associate surfaces of $D_n$, $D_{i,j}$ and $W_n$, respectively.

{\bf The Dipole $D_n$.} Suppose that the vertex set of $D_{n} ~(n\geq 1)$ is $V(D_{n})=\{v_1,v_2\}$ and the edge set is $E(D_n)=\{a_0, a_1,\ldots, a_{n-1}\}$. We choose $v_1a_0v_2$ as the spanning tree, then the joint tree of $D_n$ is shown in Figure \ref{f1} (left) and the associate surface forms as $(AB)$ in which $A$ is a permutation of $n-1$ letters $a_1,\ldots, a_{n-1}$ and $B$ is a permutation of $n-1$ letters $a_1^-,\ldots, a_{n-1}^-$. In Figure \ref{f1} (right), we give a joint tree of $D_4$, when we read the lettered semi-edges in clockwise order, we get an associate surface of $D_4$, that is $a_3a_2a_1a_2^-a_3^-a_1^-$. Using TT 2 and TT 1 successively, we have
$$a_3a_2a_1a_2^-a_3^-a_1^-\sim a_2a_1a_2^-a_1^-a_3a_3^-\sim a_2a_1a_2^-a_1^- \sim \mathbb{S}_1,$$ which corresponds to an embedding of $D_4$ on the torus.
\begin{figure}[h]
\centering
\begin{tikzpicture}
\begin{scope}
\draw [ultra thick](0,0)--(1.5,0);
\draw{(-120:1) -- (0, 0)(-150:1) -- (0, 0)(-180:1) -- (0, 0)(-240:1) -- (0, 0)};
\draw{[xshift=1.5cm](60:1) -- (0, 0)(-60:1) -- (0, 0)(0:1) -- (0, 0)(-30:1) -- (0, 0)};
\filldraw [black] (-0.5,0.5) circle (0.5pt);
\filldraw [black] (-0.5,0.35) circle (0.5pt);
\filldraw [black] (-0.5,0.15) circle (0.5pt);
\filldraw [black] (2,0.5) circle (0.5pt);
\filldraw [black] (2,0.35) circle (0.5pt);
\filldraw [black] (2,0.15) circle (0.5pt);
\filldraw [black] (0,0) circle (2pt);
\filldraw [black] (1.5,0) circle (2pt);	
\node at (0.1,-0.2) {$v_1$};\node at (1.4,-0.2) {$v_2$};\node at (0.75,0.2) {$a_0$};
\node at (-1,0.2) {$A$};\node at (2.5,0.2) {$B$};		
\end{scope}

\begin{scope}[xshift=5cm]
\filldraw [black] (0,0) circle (2pt);
\filldraw [black] (1.5,0) circle (2pt);	
\draw [ultra thick](0,0)--(1.5,0);
\draw{(-120:1) -- (0, 0)(-180:1) -- (0, 0)(-240:1) -- (0, 0)};
\draw{[xshift=1.5cm](60:1) -- (0, 0)(-60:1) -- (0, 0)(0:1) -- (0, 0)};
\node at (0.1,-0.2) {$v_1$};\node at (1.4,-0.2) {$v_2$};\node at (0.75,0.2) {$a_0$};
\node at (-1,0.2) {$a_2$};\node at (2.5,0.2) {$a_3^-$};	
\node at (-0.2,0.8) {$a_1$};\node at (1.7,0.8) {$a_2^-$};
\node at (-0.2,-0.8) {$a_3$};\node at (1.7,-0.8) {$a_1^-$};
\end{scope}
\end{tikzpicture}
\caption{The joint tree of $D_n$ (left) and a joint tree of $D_4$ (right)}\label{f1}
\end{figure}

{\bf The tripolar graph $D_{i,j}$.} Suppose that the vertex set of $D_{i,j}~ (i,j\geq 1)$ is $V(D_{i,j})=\{v_0, v_1, v_2\}$ and the edge set is $E(D_{i,j})=\{a_0, a_1,\ldots, a_{i-1}, b_0, b_1,\ldots b_{j-1}\}$ in which the endpoints of $a_m ~(0\leq m\leq i-1)$ are $v_1$ and $v_0$, the endpoints of $b_t ~(0\leq t\leq j-1)$ are $v_2$ and $v_0$. We choose $v_1a_0v_0b_0v_2$ as the spanning tree, then the joint tree of $D_{i,j}$ is shown in Figure \ref{f2} (left) and the associate surface forms as $(ADBC)$ in which $A$ is a permutation of $i-1$ letters $a_1,\ldots, a_{i-1}$, $B$ is a permutation of $j-1$ letters $b_1,\ldots, b_{j-1}$ and $DC$ is the permutation of $a_1^-,\ldots, a_{i-1}^-, b_1^-\ldots, b_{j-1}^- $. In Figure \ref{f2} (right), we give a joint tree of $D_{3,2}$, whose corresponding associate surface is $a_1a_2a_1^-b_1^-b_1a_2^-$. By using TT 1 for once, we have $$a_1a_2a_1^-b_1^-b_1a_2^-\sim a_1a_2a_1^-a_2^-\sim \mathbb{S}_1,$$ so it represents an embedding of $D_{3,2}$ on the torus.

\begin{figure}[h]
\centering
\begin{tikzpicture}
\begin{scope}
\draw [ultra thick](0,0)--(2,0);
\draw{(-120:1) -- (0, 0)(-150:1) -- (0, 0)(-180:1) -- (0, 0)(-240:1) -- (0, 0)};
\draw{[xshift=2cm](60:1) -- (0, 0)(-60:1) -- (0, 0)(0:1) -- (0, 0)(-30:1) -- (0, 0)};
\draw{[xshift=1cm](-120:1) -- (0, 0)(60:1) -- (0, 0)(120:1) -- (0, 0)(-60:1) -- (0, 0)};
\filldraw [black] (-0.5,0.5) circle (0.5pt);
\filldraw [black] (-0.5,0.35) circle (0.5pt);
\filldraw [black] (-0.5,0.15) circle (0.5pt);
\filldraw [black] (2.5,0.5) circle (0.5pt);
\filldraw [black] (2.5,0.35) circle (0.5pt);
\filldraw [black] (2.5,0.15) circle (0.5pt);

\filldraw [black] (0.85,0.6) circle (0.5pt);
\filldraw [black] (1,0.6) circle (0.5pt);
\filldraw [black] (1.15,0.6) circle (0.5pt);
\filldraw [black] (0.85,-0.7) circle (0.5pt);
\filldraw [black] (1,-0.7) circle (0.5pt);
\filldraw [black] (1.15,-0.7) circle (0.5pt);

\filldraw [black] (0,0) circle (2pt);
\filldraw [black] (1,0) circle (2pt);	
\filldraw [black] (2,0) circle (2pt);	
\node at (0.1,-0.2) {$v_1$};\node at (1.9,-0.2) {$v_2$};\node at (1,-0.35) {$v_0$};
\node at (0.5,0.2) {$a_0$};\node at (1.5,0.2) {$b_0$};
\node at (-1,0.2) {$A$};\node at (3,0.2) {$B$};		
\node at (1,1) {$D$};\node at (1,-1.1) {$C$};	
\end{scope}

\begin{scope}[xshift=5cm]
\draw [ultra thick](0,0)--(2,0);
\draw{(-120:1) -- (0, 0)(-240:1) -- (0, 0)};
\draw{[xshift=2cm](0:1) -- (0, 0)};
\draw{[xshift=1cm](120:1) -- (0, 0)(60:1) -- (0, 0)(-90:1) -- (0, 0)};
\filldraw [black] (0,0) circle (2pt);
\filldraw [black] (1,0) circle (2pt);	
\filldraw [black] (2,0) circle (2pt);	
\node at (0.1,-0.2) {$v_1$};\node at (1.9,-0.2) {$v_2$};\node at (0.8,-0.2) {$v_0$};
\node at (0.5,0.2) {$a_0$};\node at (1.5,0.2) {$b_0$};
\node at (-0.5,0.5) {$a_2$};\node at (-0.5,-0.5) {$a_1$};		
\node at (0.9,0.8) {$a_1^-$};\node at (1.8,0.8) {$b_1^-$};\node at (2.8,0.2) {$b_1$};\node at (1.3,-0.8) {$a_2^-$};	
\end{scope}
\end{tikzpicture}
\caption{The joint tree of $D_{i,j}$ (left) and a joint tree of $D_{3,2}$ (right)}\label{f2}
\end{figure}

{\bf The wheel graph $W_n$.} Suppose the the vertex set of $W_n$ is $V(W_n)=\{v_0, v_1,\ldots, v_n\}$ in which $(v_1v_2\cdots v_n)$ is the $n$-cycle, $v_0$ joins to each $v_i$ $(1\leq i\leq n)$. We choose $v_1v_2\cdots v_nv_0$ as the spanning tree, label cotree edge $v_1v_n$ by $a_0$, label cotree edge $v_0v_{i}$ by $a_i$ for $1\leq i\leq n-1$. According to the places of $a_0$ and $a_0^-$ in the joint trees, we can classify all joint trees into two cases as shown in Figure \ref{f3}. And the associate surfaces form as $(a_0Ba_0^{-}AC)$ and $(a_0BAa_0^{-}C)$ in which $A$ is a permutation of $n-1$ letters $a_1,\ldots, a_{n-1}$, $BC$ is a permutation of $n-1$ letters $a_1^-,\ldots, a_{n-1}^-$.
It is easy to see that the two cases have symmetry,  so we only need to consider one of them.

\begin{figure}[h]
\centering
\begin{tikzpicture}
\begin{scope}
\draw [ultra thick](0,0)--(3,0);
\draw{(-180:0.75) -- (0, 0)};
\draw{[xshift=3cm](60:0.75) -- (0, 0)(0:0.75) -- (0, 0)(-30:0.75) -- (0, 0)(-60:0.75) -- (0, 0)};
\draw{[xshift=2.25cm](-90:0.75) -- (0, 0)};

\filldraw [black] (3.3,0.35) circle (0.5pt);
\filldraw [black] (3.3,0.25) circle (0.5pt);
\filldraw [black] (3.3,0.15) circle (0.5pt);

\filldraw [black] (0,0) circle (2pt);
\filldraw [black] (0.75,0) circle (2pt);	
\filldraw [black] (2.25,0) circle (2pt);
\filldraw [black] (3,0) circle (2pt);

\node at (0,-0.2) {$v_1$};\node at (0.75,-0.2) {$v_2$};\node at (2.1,-0.2) {$v_n$};\node at (2.85,-0.2) {$v_0$};
\node at (-0.5,0.2) {$a_0$}; \node at (1.5,0.4) {$C$};\node at (1.5,-0.4) {$B$};\node at (3.7,0.2) {$A$};\node at (2.5,-0.75) {$a_0^-$};
	
\end{scope}

\begin{scope}[xshift=6cm]
\draw [ultra thick](0,0)--(3,0);
\draw{(-180:0.75) -- (0, 0)};
\draw{[xshift=3cm](60:0.75) -- (0, 0)(0:0.75) -- (0, 0)(-30:0.75) -- (0, 0)(-60:0.75) -- (0, 0)};
\draw{[xshift=2.25cm](90:0.75) -- (0, 0)};

\filldraw [black] (3.3,0.35) circle (0.5pt);
\filldraw [black] (3.3,0.25) circle (0.5pt);
\filldraw [black] (3.3,0.15) circle (0.5pt);

\filldraw [black] (0,0) circle (2pt);
\filldraw [black] (0.75,0) circle (2pt);	
\filldraw [black] (2.25,0) circle (2pt);
\filldraw [black] (3,0) circle (2pt);

\node at (0,-0.2) {$v_1$};\node at (0.75,-0.2) {$v_2$};\node at (2.1,-0.2) {$v_n$};\node at (2.85,-0.2) {$v_0$};
\node at (-0.5,0.2) {$a_0$}; \node at (1.5,0.4) {$C$};\node at (1.5,-0.4) {$B$};\node at (3.7,0.2) {$A$};\node at (2.5,0.75) {$a_0^-$};	
\end{scope}
\end{tikzpicture}
\caption{The joint trees of $W_n$}\label{f3}
\end{figure}

\subsection{Reduction formula of $\Gamma_{W_n}(x)$}
In this subsection, by analyzing the relations among the associated surfaces of graphs $W_n$, $D_n$, and $D_{n,n-i}$, we
establish a reduction formula of $\Gamma_{W_n}(x)$ in term of $\Gamma_{D_n}(x)$ and
$\Gamma_{D_{i+1,n-i-1}}(x)$. We denote the number of surfaces of genus $k$ in a surface set $S$ by $g_k(S)$.

\begin{lemma}\label{le2:1}
For the genus polynomial of the wheel graph $W_n$~$(n\geq 3)$, $$\Gamma_{W_n}(x)= \frac{2}{(n-1)!}\Gamma_{D_{n}}(x)+2x\sum\limits_{i=0}^{n-2}\binom{n-1}{i}\frac{1}{i!(n-2-i)!}\Gamma_{D_{i+1,n-1-i}}(x).$$
\end{lemma}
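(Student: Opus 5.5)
The plan is to enumerate the associated surfaces of $W_n$ coming from the joint trees in Figure \ref{f3}, to sort them by which semi-edges lie on the $B$-side of the path $v_1\cdots v_n$, and to identify each class with associated surfaces of $D_n$ or of a tripolar graph by TT 1 and TT 2.

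First I fix the combinatorics. The cycle vertices $v_2,\dots,v_n$ have degree $3$, so at each of them the only freedom is on which side of the path the cotree semi-edge $a_i^-$ hangs. The semi-edge $a_0^-$ at $v_n$ gives the factor $2$ between the two forms, which are symmetric. I will therefore work only with $(a_0Ba_0^-AC)$ and multiply by $2$ at the end.

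In this form, $B$ is the subsequence of $a_1^-,\dots,a_{n-1}^-$ indexed by a subset $S\subseteq\{1,\dots,n-1\}$, read in path order, and $C$ is the complementary subsequence read in reverse order. Both orders are forced, and $A$ is an arbitrary permutation of $a_1,\dots,a_{n-1}$, giving $(n-1)!$ choices. This recovers the total $2\cdot 2^{n-1}(n-1)!$, which matches the number of rotation systems of $W_n$ and serves as a consistency check. The $2^{n-1}$ subsets then split as one empty subset plus $\sum_{i=0}^{n-2}\binom{n-1}{i}$ nonempty subsets, with $|C|=i$ and $|B|=n-1-i\ge1$. This is exactly the index structure of the claimed formula.

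\emph{Case $S=\emptyset$.} TT 1 gives $a_0a_0^-AC\sim AC$. Here $A$ is an arbitrary permutation and $C$ is a fixed permutation of the inverse letters, so $AC$ is an associated surface of $D_n$ whose second block is frozen. The $(n-1)!^2$ associated surfaces of $D_n$ are permuted by simultaneous relabelling of the letters, and any fixed $C$ can be carried to any other this way. Hence fixing $C$ yields exactly a $1/(n-1)!$ fraction of the genus distribution of $D_n$, and this case contributes $\Gamma_{D_n}(x)/(n-1)!$.

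\emph{Case $|B|=n-1-i\ge1$.} Let $b^-$ be a letter of $B$; its partner $b$ lies in $A$, so the word has the interlaced pattern $a_0\cdots b^-\cdots a_0^-\cdots b\cdots$. Applying TT 2 with $x=a_0$ and $y=b^-$ splits off a handle $a_0b^-a_0^-b$, which accounts for the factor $x$. The remaining word has the shape of an associated surface $(A'D'B'C')$ of a tripolar graph. Its parts are the following:
\begin{itemize}
\item the letters of $A$ other than $b$ form the rotation part at $v_0$;
\item the blocks coming from $B\setminus\{b^-\}$ and from $C$ play the roles of the two outer rotation parts.
\end{itemize}
I expect these sizes to match $D_{i+1,n-1-i}$, whose cotree blocks have sizes $i$ and $n-2-i$. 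The orders inside the $B$- and $C$-blocks are forced, so only $A$ varies. By the same relabelling-symmetry argument, fixing those two blocks picks out a fraction $1/(i!(n-2-i)!)$ of the embeddings of $D_{i+1,n-1-i}$. That fraction normalises the count and, summed over the $\binom{n-1}{i}$ subsets of each size, produces the claimed term.

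The main obstacle is the TT 2 step in the nonempty case. I have to verify carefully that after the handle normalisation the leftover word is \emph{exactly} of the form $(ADBC)$ of Figure \ref{f2}, with correct block sizes and orientations. I also have to check that, as $A$ ranges over all permutations, each tripolar associated surface with those frozen outer blocks is hit equally often. What I would try first is the choice $b^-=$ the last letter of $B$, so that the segment between $b^-$ and $a_0^-$ is empty and TT 2 leaves a clean concatenation. I would then check the bijection on small cases such as $n=3,4$ before writing the general relabelling argument.
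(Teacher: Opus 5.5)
Your proposal is correct and is essentially the paper's proof. Like you, the paper sorts the associated surfaces by side-subset (its relabelled sets $M_i$) and uses TT 1 when $B$ is empty to get $\Gamma_{D_n}(x)/(n-1)!$. For nonempty $B$ it applies TT 2 with exactly your choice of $y$, the last letter of $B$ (adjacent to $a_0^-$). Writing $B=B'b^-$ and $A=A_1\,b\,A_2$ then leaves $C\,A_1\,B'\,A_2$, with $(A_1,A_2)$ running bijectively over the $(n-1)!$ rotations at $v_0$ of $D_{i+1,n-1-i}$. One small slip: the degree-$3$ vertices carrying $a_1^-,\dots,a_{n-1}^-$ are $v_1,\dots,v_{n-1}$, while $v_n$ carries $a_0^-$; your count $2^n(n-1)!$ is unaffected.
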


\begin{proof}
We define $n+2$ surface sets $M_0,\ldots, M_{n+1}$ as follows, in which $A$ is a permutation of $n-1$ letters $a_1^-,\ldots, a_{n-1}^-$ and
$CD$ is a permutation of $n-2$ letters $a_1^-,\ldots, a_{n-2}^-$.
$$M_{0}=\{a_{0}a_{1}\cdots
a_{n-1}a^{-}_{0}A\},$$ $$M_{i}=\{a_{i} \cdots
a_{1}a_{0}a_{i+1} \cdots
a_{n-1}a^{-}_{0}A\}~(1\leq i\leq n-2),$$ $$M_{n-1}=\{a_{n-1}\cdots a_{1}a_{0}a^{-}_{0}A\},$$
$$M_n=\{a_1a_2\cdots a_{n-1}A\},$$
$$M_{n+1}=\{a_1\cdots a_{i-1}C a_i\cdots a_{n-2}D\}.$$

According to the associate surfaces of $D_n$, $D_{i,n-i}$ and $W_n$ and the symmetry,  we have
\begin{eqnarray}\label{e1}
g_k(D_n)=(n-1)!g_k(M_n),\end{eqnarray}
\begin{eqnarray}\label{e2}
g_k(D_{i,n-i})=(i-1)!(n-i-1)!g_k(M_{n+1}),\end{eqnarray}
\begin{eqnarray}\label{e3} g_k(W_n)=\sum\limits_{i=0}^{n-1}2\binom{n-1}{i}g_k(M_i).\end{eqnarray}

Now we consider $M_0$, let $A=A_1a_{n-1}^-A_2$, then from TT2
\begin{eqnarray*}
a_{0}a_{1}\cdots a_{n-1}a^{-}_{0}A&=&a_{0}a_{1}\cdots a_{n-1}a^{-}_{0}A_1a_{n-1}^-A_2\\
&\sim & a_1\cdots a_{n-2}A_2A_1a_0a_{n-1}a_0^-a_{n-1}^-.
\end{eqnarray*}
So we have for $k\geq 1$,
\begin{eqnarray}\label{e4} g_k(M_0)=\frac{n-1}{(n-2)!}g_{k-1}(D_{n-1}),\end{eqnarray}
and $g_0(M_0)=0.$

Then we consider $M_i$,$(1\leq i\leq n-2)$, let $A=A_1a_{n-1}^-A_2$, then from TT2
\begin{eqnarray*}
a_{i} \cdots a_{1}a_{0}a_{i+1} \cdots a_{n-1}a^{-}_{0}A&=&a_{i} \cdots a_{1}a_{0}a_{i+1} \cdots a_{n-1}a^{-}_{0}A_1a_{n-1}^-A_2\\
&\sim & a_i\cdots a_1A_1a_{i+1}\cdots a_{n-2}A_2a_0a_{n-1}a_0^-a_{n-1}^-.
\end{eqnarray*}
So we have for $k\geq 1$,
\begin{eqnarray}\label{e5}
g_k(M_i)=\frac{1}{i!(n-i-2)!}g_{k-1}(D_{i+1,n-1-i}),\end{eqnarray}
and $g_0(M_i)=0.$

Last we consider $M_{n-1}$, from TT1
$$a_{n-1}\cdots a_{1}a_{0}a^{-}_{0}A\sim a_{n-1}\cdots a_{1}A.$$
So we have
\begin{eqnarray}\label{e6}
g_k(M_{n-1})=g_k(M_n)=\frac{1}{(n-1)!}g_{k}(D_{n}),\end{eqnarray}
and it is easy to get that $g_0(M_{n-1})=1$, i.e., when $A=a_1^-\cdots a_{n-1}^-$.

Combining \eqref{e1}-\eqref{e6}, we have that
\begin{eqnarray}\label{e7}
g_k(W_n)
&=& 2\sum\limits_{i=0}^{n-2}\binom{n-1}{i}g_k(M_i)+2\binom{n-1}{n-1}g_k(M_{n-1})\nonumber\\
&=& \left\{\begin{array}{cc}
              2\sum\limits_{i=0}^{n-2}\binom{n-1}{i}\frac{1}{i!(n-2-i)!}g_{k-1}(D_{i+1,n-1-i})+\frac{2}{(n-1)!}g_k(D_{n}),& \mbox{when $k\geq 1;$}\\
               2& \mbox{when $k=0$.}
              \end{array}\right.
\end{eqnarray}

 From the definition of the genus polynomial, we obtain the following expansions:
$$\Gamma_{W_n}(x) = 2 + \sum_{k\ge 1} g_k(W_n)x^k,$$
$$\Gamma_{D_n}(x) = (n-1)! + \sum_{k\ge 1} g_k(D_n)x^k,$$
$$\Gamma_{D_{i+1,n-1-i}}(x) = \sum_{k\ge 0} g_k(D_{i+1,n-1-i})x^k = \sum_{k\ge 1} g_{k-1}(D_{i+1,n-1-i})x^{k-1}.$$
Consequently,
\[
\sum_{k\ge 1} g_k(D_n)x^k = \Gamma_{D_n}(x) - (n-1)!,
\qquad\mbox{and}\qquad
\sum_{k\ge 1} g_{k-1}(D_{i+1,n-1-i})x^k = x\,\Gamma_{D_{i+1,n-1-i}}(x).
\]
Hence, the reduction formula for the genus polynomial of the wheel graph $W_n$ is given by

\begin{eqnarray*}
\Gamma_{W_n}(x)&=&2+\frac{2}{(n-1)!}(\Gamma_{D_{n}}(x)-(n-1)!)+2\sum\limits_{i=0}^{n-2}\binom{n-1}{i}\frac{1}{i!(n-2-i)!}x\Gamma_{D_{i+1,n-1-i}}(x)\notag\\
&=& \frac{2}{(n-1)!}\Gamma_{D_{n}}(x)+2x\sum\limits_{i=0}^{n-2}\binom{n-1}{i}\frac{1}{i!(n-2-i)!}\Gamma_{D_{i+1,n-1-i}}(x).
\end{eqnarray*}
The proof is complete.
\end{proof}

\subsection{Class algebra of the symmetric group}
In this subsection, we present some algebraic preliminaries required for the proof of Theorem \ref{thm:distribution}

Recall that a partition $\lambda$ of $n$ may be written in frequency notation as
\[
\lambda=(1^{\lambda_1}2^{\lambda_2}\cdots n^{\lambda_n}),
\qquad
n=\lambda_1+2\lambda_2+\cdots+n\lambda_n,
\]
where $\lambda_j\in\mathbb{Z}_{\geq 0}$. And we write $\lambda\vdash n$ to indicate that $\lambda$ is a partition of $n$, we also use it for conjugacy classes of the symmetric group $S_n$, because there is a natural one-to-one correspondence between  partitions of $n$ and conjugacy classes of $S_n$.

To a partition $\lambda\vdash n$, we associate a \emph{Young diagram} with $\lambda_i$ boxes in the $i$th row, the rows of boxes left-justified. We define a \emph{tableau} on a given Young diagram to be a numbering of the boxes by the integers $1, 2, \dots, n$, and we will call it \emph{standard} if the rows and columns are increasing sequences.

For a partition
\(
\mu=(1^{m_1}2^{m_2}\cdots n^{m_n})\vdash n,
\)
define
\(
z_\mu=\prod_{j=1}^{n}j^{m_j}m_j!.
\)
Thus, the conjugacy class of cycle type $\mu$ has cardinality
$
\frac{n!}{z_\mu}.
$
We use $C_\mu$ for the conjugacy class of cycle type $\mu$ and, by the usual
abuse of notation, also for the corresponding class sum in the group algebra
$\mathbb{C}[S_n]$.

Let $\chi_\mu^\theta$ be the value of the irreducible character indexed by
$\theta\vdash n$ on the conjugacy class $C_\mu$. Define
\[
f^\theta=\chi_{(1^n)}^\theta.
\]
Hence, $f^\theta$ is the degree of the irreducible character $\chi^\theta$,
equivalently the dimension of the irreducible $S_n$-module indexed by
$\theta$. It is also the number of standard Young tableaux of shape $\theta$.

For each partition $\theta\vdash n$, let $F^\theta$ denote the primitive
central idempotent of $\mathbb{C}[S_n]$ corresponding to $\chi^\theta$:
\[
F^\theta
=
\frac{f^\theta}{n!}
\sum_{\sigma\in S_n}\chi^\theta(\sigma^{-1})\sigma
=
\frac{f^\theta}{n!}
\sum_{\mu\vdash n}\chi_\mu^\theta C_\mu.
\]
The elements $F^\theta$ satisfy
\[
F^\theta F^\nu=\delta_{\theta\nu}F^\theta,
\qquad
\sum_{\theta\vdash n}F^\theta=1_n,
\]
 and
\[
[C_{(1^n)}]F^\theta=\frac{(f^\theta)^2}{n!},
\]
where $[C_{(1^n)}] F^\theta$ denotes the coefficient of
$C_{(1^n)}=1_n$ in $F^\theta \in\mathbb{C}[S_n]$.

For a permutation $\sigma\in S_n$, let $\normcycles{\sigma}$ denote the
number of cycles of $\sigma$, including fixed points. If $\mu\vdash n$, then
$\normcycles{\mu}$ denotes the number of parts of $\mu$, equivalently the
number of cycles of a permutation of cycle type $\mu$.

Define
\[
H_\theta(x)
=
\sum_{\mu\vdash n}
\frac{n!\chi_\mu^\theta}{f^\theta z_\mu}
x^{\,n-\normcycles{\mu}}.
\]
Equivalently, if $c(u)$ denotes the content of a box $u$ in the Young diagram
of $\theta$, then
\[
H_\theta(x)=\prod_{u\in\theta}\bigl(1+c(u)x\bigr).
\]

\subsection{Proof of Theorem \ref{thm:distribution}}
In this subsection, we compute $\Gamma_{D_{i,j}}(x)$ by using
the character approach and derive $\Gamma_{D_{n}}(x)$
and $\Gamma_{W_n}(x)$ by the relations of the three.

\begin{proof}[\bf{Proof of Theorem \ref{thm:distribution}}]

For the tripolar graph $D_{i,n-1}$, let $v_{0},~v_1$, and $v_2$ be the vertices of it as defined in Subsection \ref{subsec2:2}.
Suppose $\rho_{v_0},\rho_{v_1},$ and $\rho_{v_2}$ are the vertex rotations of $v_0$, $v_1$ and $v_2$ respectively. Let $1_n$ be the identity element of $S_n.$ It is well known that any orientable embedding of
$D_{i,n-i}$ can be encoded by the factorization
 $\rho_{v_0}\rho_{v_1}\rho_{v_2}\varrho=1_n.$
 From Euler's formula, we have
\[
3+\normcycles{\rho_{v_0}\rho_{v_1}\rho_{v_2}}
=n+2-2g,
\]
equivalently,
\[
2g
=n-1-\normcycles{\rho_{v_0}\rho_{v_1}\rho_{v_2}},
\]
where $g$ is the genus of the embedded surface.

We have the following expression $\Gamma_{D_{i,n-i}}(x^2)$ of the genus polynomial of $D_{i,n-i}$: $$\Gamma_{D_{i,n-i}}(x^2)=\dsum_{\substack{\rho_{v_0}\rho_{v_1}\rho_{v_2}\varrho=1_n\\\rho_{v_0}\in C_{(n)},\rho_{v_1}\rho_{v_2}\in C_{(i,n-i})}} \frac{i!(n-i)!}{n!} x^{n-1-\parallel\rho_{v_0}\rho_{v_1}\rho_{v_2}\parallel}.$$

Using Frobenius formula (see, e.g., \cite{Ja78}), we rewrite the expression of $\Gamma_{D_{i,n-i}}(x^2)$ above as

\begin{eqnarray*}\Gamma_{D_{i,n-i}}(x^2)&=&\frac{i!(n-i)!}{n!}[C_{(1^n)}]\dsum_{\mu\vdash n}C_{(n)}C_{(i,n-i)}C_{\mu}x^{n-1-\parallel\mu\parallel}\notag \\
&=&\frac{i!(n-i)!}{n!}[C_{(1^n)}]\dsum_{\mu\vdash n}\frac{(n!)^3}{z_{(n)}z_{(i,n-i)}z_{\mu}}\dsum_{\theta\vdash n}\frac{\chi_{(n)}^{\theta}\chi_{(i,n-i)}^{\theta}\chi_{\mu}^{\theta}}{(f^{\theta})^3}F^{\theta}x^{n-1-\parallel\mu\parallel}\notag\\
&=&\frac{i!(n-i)!}{n!}\dsum_{\mu\vdash n}\frac{(n!)^2}{z_{(n)}z_{(i,n-i)}z_{\mu}}\dsum_{\theta\vdash n}\frac{\chi_{(n)}^{\theta}\chi_{(i,n-i)}^{\theta}\chi_{\mu}^{\theta}}{f^{\theta}}x^{n-1-\parallel\mu\parallel}\notag\\
&=&\frac{i!(n-i)!}{n!}\dsum_{\theta\vdash n}\frac{n!\chi_{(n)}^{\theta}\chi_{(i,n-i)}^{\theta}}{z_{(n)}z_{(i,n-i)}}\dsum_{\mu\vdash n}\frac{n!\chi_{\mu}^{\theta}}{f^{\theta}z_{\mu}}x^{n-1-\parallel\mu\parallel}\notag\\
&=&\frac{i!(n-i)!}{n!}\dsum_{\theta\vdash n}\frac{n!\chi_{(n)}^{\theta}\chi_{(i,n-i)}^{\theta}}{z_{(n)}z_{(i,n-i)}}\frac{H_{\theta}(x)}{x}.
\end{eqnarray*}

Since the character values  (see, e.g., \cite{Ja87}) are given by
\[
\chi_{(n)}^\tau =
\begin{cases}
(-1)^i, & \text{if } \tau=(1^i,n-i),\quad i=0,1,\dots,n-1,\\[2pt]
0, & \text{otherwise},
\end{cases}
\]
and
\[
\chi_{(m,n-m)}^{(1^i,n-i)} =
\begin{cases}
(-1)^i, & \text{if } 0\le i < m,\\[2pt]
0, & \text{if } m\le i < n-m,\\[2pt]
(-1)^{i-1}, & \text{if } n-m\le i < n,
\end{cases}
\]
using the character sum identity (Lemma 3.4 in \cite{JV90}, or Lemma 1 in \cite{Fang14}),
we obtain that, for \(1\le i\le n-1\), the following expression for the genus polynomial of \(D_{i,n-i}\):
\[
\begin{aligned}
\Gamma_{D_{i,n-i}}(x^2)
&= \frac{(i-1)!(n-i-1)!}{n}
\sum_{k=0}^{n-1} (-1)^k \chi_{(i,n-i)}^{(1^k,n-k)}
\frac{H_{(1^k,n-k)}(x)}{x} \nonumber \\
&= \frac{(i-1)!(n-i-1)!}{n}
\left(
\sum_{k=0}^{i-1} \frac{H_{(1^k,n-k)}(x)}{x}
-
\sum_{k=n-i}^{n-1} \frac{H_{(1^k,n-k)}(x)}{x}
\right) \\
&= \frac{(i-1)!(n-i-1)!}{n}
\left(
\sum_{k=0}^{i-1}
\frac{\prod_{j=k-n+1}^{k}(1-jx)}{x}
-
\sum_{k=n-i}^{n-1}
\frac{\prod_{j=k-n+1}^{k}(1-jx)}{x}
\right) \\
&= \frac{(-1)^n (i-1)!(n-i-1)!}{n} x^{n-1}
\left(
\sum_{k=0}^{i-1}
\left(k-n+1-\frac1x\right)^{\overline{n}}
-
\sum_{k=n-i}^{n-1}
\left(k-n+1-\frac1x\right)^{\overline{n}}
\right).
\end{aligned}
\]

For simplicity, we write $F(k,x)$ for
$k-n+1-\frac{1}{\sqrt{x}}$. Hence, the genus polynomial of the tripolar graph \(D_{i,n-i}\)
\begin{eqnarray}\label{e8}
\Gamma_{D_{i,n-i}}(x)=\frac{(-1)^n (i-1)!(n-i-1)!}{n}x^{\frac{n-1}{2}}
\left(
\sum_{k=0}^{i-1}
\left(F(k,x)\right)^{\overline{n}}
-
\sum_{k=n-i}^{n-1}
\left(F(k,x)\right)^{\overline{n}}
\right)
\end{eqnarray}

For the dipole graph $D_n$, since $n\Gamma_{D_n}(x)=\Gamma_{D_{1,n}}(x)$, one can compute that its genus polynomial is
\begin{eqnarray}\label{e9}
\Gamma_{D_n}(x)=\frac{2(n-1)!}{n(n+1)}
\sum_{m=0}^{\lfloor n/2\rfloor}
\left[ {n+1 \atop n-2m} \right]x^m.
\end{eqnarray}

Combining \eqref{e8}, \eqref{e9} and Lemma \ref{le2:1}, we obtain the following explicit formula for the genus polynomial of the wheel graph:

\begin{eqnarray}\label{e10}
\Gamma_{W_n}(x) &=& \frac{4}{n(n+1)} \sum_{m=0}^{\lfloor n/2 \rfloor} \begin{bmatrix} n+1 \\ n-2m \end{bmatrix} x^m \nonumber\\
&&\quad + \frac{2(-1)^n}{n} x^{(n+1)/2} \sum_{i=0}^{n-2} \binom{n-1}{i} \left(
\sum_{k=0}^{i-1}
\left(F(k,x)\right)^{\overline{n}}
-
\sum_{k=n-i}^{n-1}
\left(F(k,x)\right)^{\overline{n}}
\right)\nonumber\\
&=& \frac{4}{n(n+1)} \sum_{m=0}^{\lfloor n/2 \rfloor} \begin{bmatrix} n+1 \\ n-2m \end{bmatrix} x^m
+ \frac{2(-1)^n}{n} x^{(n+1)/2} \sum_{k=0}^{n-1} c_k \left(F(k,x)\right)^{\overline{n}},
\end{eqnarray}
in which
\[
c_k =
\begin{cases}
2^{n-1} - 1, & k = 0, \\[4pt]
\displaystyle \sum_{j=k+1}^{n-1-k} \binom{n-1}{j}, & 1 \le k \le n-2, \\[4pt]
-\bigl(2^{n-1} - 1\bigr), & k = n-1.
\end{cases}
\]
The proof is complete.
\end{proof}

\begin{example} Using Theorem \ref{thm:distribution}, we list the genus polynomial of $W_{n}$ for $3\leq n\leq 7$ as follows.
\begin{flalign*}
\Gamma_{W_3}(x) &= 2 + 14x,&\\
\Gamma_{W_4}(x) &= 2+58x+36x^2,&\\
\Gamma_{W_5}(x) &= 2+190x+576x^2,&\\
\Gamma_{W_6}(x) &= 2+550x+4968x^2+2160x^3,&\\
\Gamma_{W_7}(x) &= 2+1484x+31178x^2+59496x^3.
\end{flalign*}
\end{example}

\begin{remark} In \cite{Chen2018}, Chen, Gross, and Mansour obtained the region distribution of $W_n$.
 By applying Euler's formula, one can further derive from it the genus distribution of $W_n$.
In this paper, We use a different method from theirs to directly derive the genus distribution. Our expression is more convenient for the computations that follow. It is also worth noting that
our paper presents the first attempt to combine the methods of joint trees and characters to compute the genus polynomial of graphs. Perhaps this combination can solve genus distribution problems for more classes of graphs.
\end{remark}

\section{Unimodality of Genus Distributions of Wheels}\label{Sec:3}
In this section, we first express the auxiliary function $Q_n(t)$ constructed herein using the shift operator. Subsequently, by proving that all roots of
$Q_n(t)$ are purely imaginary, we conclude that all roots of $\Gamma_{W_n}(x)$ are negative real numbers, thereby establishing the log-concavity and unimodality of its coefficients.

\subsection{Shift operator representation}
We define \[
Q_n(t)=\frac{1}{2} t^{n+1}\Gamma_{W_n}(t^{-2}),
\] and \[F_{n}(t)=t^{\overline{n+1}}=t(t+1)\cdots(t+n).\] For \(0\le k\le n-1\), we set
\[
P_{n,k}(t)=\prod_{j=k-n+1}^{k}(t-j).
\]
Let \( E \) denote the backward shift operator on the polynomial space \( \mathbb{C}[t] \) defined by
\[ Ef(t) = f(t-1). \]

\begin{proposition}\label{pro3:1}
Let the notation be as above. The following identities hold.
\\(i) \begin{eqnarray}\label{e11} (1-E^n)F_{n}(t)= 2 \sum_{m=0}^{\lfloor n/2 \rfloor} \left[ {n+1 \atop n-2m} \right]  t^{n-2m};\end{eqnarray}
(ii) For $0\leq k\leq n-1,$ \begin{eqnarray}\label{e12}(E^k-E^{k+1})F_{n}(t)=(n+1)P_{n,k}(t);\end{eqnarray}
(iii) \begin{eqnarray}\label{e13}
t(1-E^n)F_{n}(t)= \left( 1 + (n + 2) \sum_{k=1}^{n-1} E^k + E^n \right) F_n(t).\end{eqnarray}
\end{proposition}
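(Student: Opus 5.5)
We work throughout with the explicit products. Here $F_n(t)=\prod_{j=0}^{n}(t+j)$, so $E^kF_n(t)=\prod_{j=0}^{n}(t-k+j)=\prod_{i=k-n}^{k}(t-i)$. In particular $E^nF_n(t)=\prod_{i=0}^{n}(t-i)=(-1)^{n+1}F_n(-t)$.

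\textbf{Identity (i).} Expand $F_n(t)=t^{\overline{n+1}}=\sum_{j}\left[ {n+1 \atop j} \right]t^j$. Then
\[
(1-E^n)F_n(t)=F_n(t)+(-1)^{n}F_n(-t)=\sum_j\left[ {n+1 \atop j} \right]\bigl(1+(-1)^{n+j}\bigr)t^j .
\]
Only the terms with $j\equiv n \pmod 2$ survive, each doubled. Writing $j=n-2m$ gives (i); the upper limit $m\le\lfloor n/2\rfloor$ appears because $\left[ {n+1 \atop 0} \right]=0$.

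\textbf{Identity (ii).} We have $E^kF_n=\prod_{i=k-n}^{k}(t-i)$ and $E^{k+1}F_n=\prod_{i=k-n+1}^{k+1}(t-i)$. Both contain the common factor $\prod_{i=k-n+1}^{k}(t-i)=P_{n,k}(t)$. Factoring it out leaves
\[
(t-(k-n))-(t-(k+1))=n+1,
\]
which proves (ii).

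\textbf{Identity (iii).} This is the only step with real content, and we derive it from (ii). Multiply the identity $(E^k-E^{k+1})F_n=(n+1)P_{n,k}$ by $t$. Note that
\[
t\,P_{n,k}(t)=\prod_{i=k-n+1}^{k}(t-i)\cdot t .
\]
For $0\le k\le n-1$ the range $k-n+1\le 0\le k$ contains $0$, so the factor $t$ is already present in $P_{n,k}$. The cleaner route is therefore to compute $tE^kF_n$ directly. Since $tE^kF_n=(t-k+k)E^kF_n$, we get
\[
tE^kF_n(t)=E^k\bigl(tF_n(t)\bigr)+kE^kF_n(t).
\]
This reduces everything to expressing $tF_n$. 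We have $tF_n(t)=(t+n+1)F_n(t)-(n+1)F_n(t)$, and $(t+n+1)F_n(t)=F_{n+1}(t)$. Both sides of (iii) lie in the span of $E^jF_n$ for $0\le j\le n+1$, which is a basis of the polynomials of degree $\le n+1$ (the shifts of a degree-$(n+1)$ polynomial).

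So the plan for (iii) is to write
\[
t(1-E^n)F_n=\sum_{j=0}^{n+1}\alpha_jE^jF_n
\]
and determine the $\alpha_j$ by evaluating at the points $t=0,1,\dots,n+1$. The value $E^jF_n(i)=\prod_{l=j-n}^{j}(i-l)$ vanishes unless $i>j$ or $i<j-n$. Because of this, the evaluation system is triangular and the coefficients can be found directly.

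A simpler alternative is a telescoping argument. By (ii), $\sum_{k=0}^{n-1}(E^k-E^{k+1})F_n=(1-E^n)F_n$. One also checks $tP_{n,k}(t)=\frac{1}{n+1}\bigl((t-k+n)E^{k+1}F_n\cdots\bigr)$, but we will rather verify (iii) by comparing values. The left side $t\,(F_n(t)-E^nF_n(t))$ vanishes at $t=0,1,\dots,n$ except at $t=0$, where it is $0$ too. In fact $F_n(i)=0$ only at $i=0$ among nonnegative integers, so the left side equals $iF_n(i)$ at $i=1,\dots,n$. For the right side we evaluate each $E^kF_n(i)=\prod_{l=k-n}^{k}(i-l)$, which is nonzero exactly when $i>k$, and equals $\frac{i!\,(i-k+n)!}{(i-k-1)!\,\cdots}$-type ratios.

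We will match the two sides at $n+2$ points (for example $t=-1,0,\dots,n$), using the binomial identity $\sum_{k=1}^{i-1}\binom{\cdot}{\cdot}$ that arises, together with a comparison of leading coefficients. The leading coefficients agree: the left side has leading term $2t^{n+1}$ for $n$ odd, and the right side has $n+1+(n-1)(n+2)\cdot$ multiplied by the leading behavior. These need care with signs and parity.

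The main obstacle is this coefficient matching in (iii). If the direct evaluation is messy, the fallback is to use (ii) to rewrite the right side as
\[
\Bigl(1+(n+2)\sum_{k=1}^{n-1}E^k+E^n\Bigr)F_n=(n+2)\sum_{k}(\ldots)-(n+1)(F_n+E^nF_n),
\]
and then prove the equivalent identity
\[
t\sum_{k=0}^{n-1}P_{n,k}=\sum_{k}(\text{weights})\,E^kF_n
\]
through generating functions in the operator $E$.
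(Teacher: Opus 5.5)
Your proofs of (i) and (ii) are correct and are the same as the paper's: expand $F_n(t)$ and $F_n(t-n)$ in Stirling numbers, and factor out $P_{n,k}$. The gap is in (iii), the only part with real content. There you list three strategies: a triangular solve in the basis $E^jF_n$, evaluation at $t=-1,0,\dots,n$, and a generating-function fallback. You carry none of them out. The "binomial identity $\sum_{k=1}^{i-1}\binom{\cdot}{\cdot}$" is never specified, and you yourself call the matching the main obstacle.

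The side computations you do give are also wrong. The $t^{n+1}$ terms cancel, so $(1-E^n)F_n$ has degree $n$. Hence the left side of (iii) has leading coefficient $2\binom{n+1}{2}=n(n+1)$ for every $n$, not $2$ for odd $n$. The right side has leading coefficient $1+(n+2)(n-1)+1=n(n+1)$, with no sign or parity issue. As written, (iii) is unproved.

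The missing ingredient is the one-step relation $(t-k-1)E^kF_n=(t-k+n)E^{k+1}F_n$. You already have it implicitly in (ii), since $E^kF_n=(t-k+n)P_{n,k}$ and $E^{k+1}F_n=(t-k-1)P_{n,k}$.

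\textbf{Paper's route.} The sum $\sum_{k=0}^{n-1}\bigl[(t-k+n+1)E^kF_n-(t-k+n)E^{k+1}F_n\bigr]$ telescopes to $(t+n+1)F_n-(t+1)E^nF_n$. Termwise, by the relation, it equals $(n+2)\sum_{k=0}^{n-1}E^kF_n$. Rearranging gives (iii).

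\textbf{Your abandoned ``multiply (ii) by $t$'' idea.} It also works. From the factorization, $tP_{n,k}=E^kF_n-(n-k)P_{n,k}$. Hence $t(1-E^n)F_n=(n+1)\sum_{k=0}^{n-1}tP_{n,k}=(n+1)\sum_{k=0}^{n-1}E^kF_n-\sum_{k=0}^{n-1}(n-k)(E^k-E^{k+1})F_n$. Summation by parts turns the last sum into $\bigl(n-\sum_{k=1}^{n-1}E^k-E^n\bigr)F_n$, which gives (iii) immediately.

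\textbf{Your evaluation route.} It closes as follows. For $1\le i\le n$, both sides at $t=i$ reduce to $iF_n(i)=F_n(i)+(n+2)\sum_{m=1}^{i-1}F_n(m)$. Using $F_n(m)=(n+1)!\binom{m+n}{n+1}$ and the hockey-stick identity, this is $(i-1)\binom{i+n}{n+1}=(n+2)\binom{i+n}{n+2}$. Both sides vanish at $t=0$, and both equal $(-1)^{n+1}(n+1)!$ at $t=-1$. Both sides have degree at most $n+1$, so these $n+2$ points suffice and no leading-coefficient comparison is needed.
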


\begin{proof} For the item (i), it is well known that  signless Stirling numbers of the first kind satisfies
\[F_n(t) = \sum_{j=0}^{n+1} \left[ {n+1 \atop j} \right]t^j \] and
\[E^nF_{n}(t)=F_n(t - n) = t(t - 1) \cdots (t - n) = \sum_{j=0}^{n+1} (-1)^{n+1-j} \left[ {n+1 \atop j} \right] t^j.\]
Subtracting the above two identities gives
$$(1-E^n)F_{n}(t)=2\sum_{\substack{j=0 \\ n+1-j ~\text{is odd}}}^{n+1}\left[ {n+1 \atop j} \right]t^j =2 \sum_{m=0}^{\lfloor n/2 \rfloor} \left[ {n+1 \atop n-2m} \right]  t^{n-2m}.$$

For the item (ii),
\begin{eqnarray*}
(E^k-E^{k+1})F_{n}(t)&=&\prod_{j=k-n}^{k}(t-j)-\prod_{j=k-n+1}^{k+1}(t-j)\\
&=&(t-k+n)P_{n,k}-(t-k-1)P_{n,k}\\
&=&(n+1)P_{n,k}.
\end{eqnarray*}

For the item (iii), from the definition of $F_n$, we have
$$(t-k-1)F_{n}(t-k)=(t-k+n)F_n(t-k-1).$$
By using this,
\begin{eqnarray}\label{e14}
&&(t+n+1)F_{n}(t)-(t+1)F_{n}(t-n)\nonumber\\
&=&\sum_{k=0}^{n-1}\big((t-k+n+1)F_{n}(t-k)-(t-k+n)F_{n}(t-k-1)\big)\nonumber\\
&=&\sum_{k=0}^{n-1}\big((t-k+n+1)F_{n}(t-k)-(t-k-1)F_{n}(t-k)\big)\nonumber\\
&=&\sum_{k=0}^{n-1}(n+2)F_{n}(t-k)
\end{eqnarray}
From \eqref{e14}, we have
$$t\left(F_{n}(t)-F_{n}(t-n)\right)=F_{n}(t)+(n+2)\sum_{k=1}^{n-1}F_{n}(t-k)+F_{n}(t-n),$$ this proves  \eqref{e13}.
\end{proof}

We are now ready to prove the following shift operator representation for the auxiliary function $Q_n(t)$.

\begin{lemma}\label{le:shift}
For every $n\geq 3$,
\[Q_n(t)=h_n(E)F_n(t),
\]
where
\[
h_n(z)=\frac{1}{n(n+1)}
\left(
2^{n-1}(1+z^n)+\sum_{k=1}^{n-1}\left(n+2-\binom{n}{k}\right)z^k
\right).
\]
\end{lemma}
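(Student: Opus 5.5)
The plan is to substitute $x=t^{-2}$ into the explicit formula of Theorem~\ref{thm:distribution}, rewrite each of the two resulting pieces of $Q_n(t)=\frac12 t^{n+1}\Gamma_{W_n}(t^{-2})$ as an operator in $E$ applied to $F_n(t)$ using Proposition~\ref{pro3:1}, and then collect coefficients of each power $E^k$.

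\emph{First (dipole) term.} Since $\frac12 t^{n+1}\cdot x^m=t\cdot t^{n-2m}$ when $x=t^{-2}$, the first term becomes
\[
\frac{2}{n(n+1)}\, t\sum_{m=0}^{\lfloor n/2\rfloor}\left[ {n+1 \atop n-2m} \right]t^{n-2m}
=\frac{1}{n(n+1)}\, t(1-E^n)F_n(t)
\]
by \eqref{e11}. Then \eqref{e13} turns this into
\[
\frac{1}{n(n+1)}\Bigl(1+(n+2)\sum_{k=1}^{n-1}E^k+E^n\Bigr)F_n(t).
\]

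\emph{Second (tripolar) term.} Here $x^{(n+1)/2}=t^{-(n+1)}$ cancels the prefactor $t^{n+1}$, and $F(k,t^{-2})=k-n+1-t$. Hence
\[
(k-n+1-t)^{\overline{n}}=\prod_{j=k-n+1}^{k}(j-t)=(-1)^nP_{n,k}(t),
\]
so the second term equals $\frac1n\sum_{k=0}^{n-1}c_kP_{n,k}(t)$. By \eqref{e12} this is
\[
\frac{1}{n(n+1)}\sum_{k=0}^{n-1}c_k(E^k-E^{k+1})F_n(t).
\]
An Abel summation rewrites the operator as
\[
c_0+\sum_{k=1}^{n-1}(c_k-c_{k-1})E^k-c_{n-1}E^n .
\]

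\emph{Collecting coefficients.} Adding the two pieces, the coefficient of $E^0$ and of $E^n$ is $(1+c_0)/(n(n+1))=2^{n-1}/(n(n+1))$, using $c_0=2^{n-1}-1$ and $c_{n-1}=-(2^{n-1}-1)$; this matches $h_n$. For $1\le k\le n-1$ the coefficient is $\bigl(n+2+c_k-c_{k-1}\bigr)/(n(n+1))$. So everything reduces to the combinatorial identity
\[
c_{k-1}-c_k=\binom{n}{k},\qquad 1\le k\le n-1 .
\]
In the generic range this follows from $\binom{n-1}{k}+\binom{n-1}{n-k}=\binom{n-1}{k}+\binom{n-1}{k-1}=\binom nk$, obtained by peeling the two end terms off the sum defining $c_{k-1}$. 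For $k>(n-1)/2$ the sum $\sum_{j=k+1}^{n-1-k}$ must be read with the convention $\sum_{a}^{b}=-\sum_{b+1}^{a-1}$. That convention is exactly what the interchange of summation in \eqref{e10} produces: there $c_k=\sum_{i>k}\binom{n-1}{i}-\sum_{i\ge n-k}\binom{n-1}{i}$ over $0\le i\le n-2$.

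\emph{Main obstacle.} The delicate part is the boundary bookkeeping at $k=1$ and $k=n-1$, where the endpoint values $c_0$ and $c_{n-1}$ and the range $0\le i\le n-2$ of Lemma~\ref{le2:1} interact. A naive reading gives an off-by-one discrepancy there. I would resolve it by recomputing $c_k$ directly from \eqref{e10} via the double-sum swap, rather than trusting the piecewise formula, so that the endpoint terms are consistent. I would then sanity-check the final identity against the example $\Gamma_{W_3}(x)=2+14x$, i.e.\ $Q_3(t)=t^4+7t^2$, by expanding $h_3(E)F_3(t)$ with $F_3(t)=t(t+1)(t+2)(t+3)$ explicitly.
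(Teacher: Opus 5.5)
Your main line is the paper's proof. You substitute $x=t^{-2}$, turn the dipole part into $\frac{1}{n(n+1)}t(1-E^n)F_n(t)$ via \eqref{e11} and then apply \eqref{e13}, write $(k-n+1-t)^{\overline{n}}=(-1)^nP_{n,k}(t)$ and apply \eqref{e12}, Abel-sum, and reduce everything to $c_{k-1}-c_k=\binom nk$. Your remark that the piecewise $c_k$ must be read with $\sum_a^b=-\sum_{b+1}^{a-1}$ is a real correction. Under the usual empty-sum convention, $n=4$ gives $c_2=0$ and $c_2-c_1=-3\neq-\binom42$; the correct value is $c_2=-3$. With your convention, $c_k=S(n-1-k)-S(k)$ with $S(m)=\sum_{j=0}^{m}\binom{n-1}{j}$ for \emph{all} $0\le k\le n-1$. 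This reproduces $c_0=2^{n-1}-1$ and $c_{n-1}=-(2^{n-1}-1)$. Hence $c_{k-1}-c_k=\binom{n-1}{n-k}+\binom{n-1}{k}=\binom nk$ for every $1\le k\le n-1$, and no boundary obstacle remains once you take the formula of Theorem~\ref{thm:distribution} as stated, as the paper does.

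The flaw is in your last paragraph and the sentence before it, which push the fix in the wrong direction. The formula you attribute to the summation swap, $c_k=\sum_{i>k}\binom{n-1}{i}-\sum_{i\ge n-k}\binom{n-1}{i}$ over $0\le i\le n-2$, gives $c_0=2^{n-1}-2$ and $c_{n-1}=-(2^{n-1}-2)$. That contradicts the values you use when collecting coefficients.

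If you recompute $c_k$ from the printed middle line of \eqref{e10} ``rather than trusting the piecewise formula'', you get $(2^{n-1}-1)/(n(n+1))$ at $E^0,E^n$ and $3/(n(n+1))$ at $E^1,E^{n-1}$, which is a false identity. For $n=3$ it yields $t^4+5t^2$ instead of $Q_3(t)=t^4+7t^2$, so your own sanity check would fail.

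The off-by-one is in that printed line. Lemma~\ref{le2:1} involves $\Gamma_{D_{i+1,n-1-i}}$, so \eqref{e8} must be used with $i\mapsto i+1$; the factorial prefactors already cancel that way. The inner sums are then $\sum_{k=0}^{i}$ and $\sum_{k=n-1-i}^{n-1}$. The correct swap gives $c_k=\sum_{i=k}^{n-2}\binom{n-1}{i}-\sum_{i=n-1-k}^{n-2}\binom{n-1}{i}$. By $\binom{n-1}{i}=\binom{n-1}{n-1-i}$ this agrees with the piecewise values, endpoints included. It also gives $c_{k-1}-c_k=\binom{n-1}{k-1}+\binom{n-1}{n-1-k}=\binom nk$ with no case split.

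So the piecewise formula is the one to trust. Drop the proposed recomputation, or redo it with the corrected indices, and your argument is complete.
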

\begin{proof}
Using the explicit formula \eqref{e10} for $\Gamma_{W_n}(x)$ and substituting \(x=t^{-2}\) (so that \(1/\sqrt{x}=t\)) into the definition \(Q_n(t)=\frac12 t^{n+1}\Gamma_{W_n}(t^{-2})\), we obtain
\begin{eqnarray}\label{e15}
Q_n(t)=\frac{2}{n(n+1)}
\sum_{m=0}^{\lfloor n/2\rfloor}
\begin{bmatrix}n+1\\ n-2m\end{bmatrix}t^{n+1-2m}
+\frac{(-1)^n}{n}\sum_{k=0}^{n-1}c_k(k-n+1-t)^{\overline{n}}.
\end{eqnarray}
From \eqref{e11} and \eqref{e13},
\begin{eqnarray}\label{e16}
\frac{2}{n(n+1)}\sum_{m=0}^{\lfloor n/2\rfloor}
\begin{bmatrix}n+1\\ n-2m\end{bmatrix}t^{n+1-2m}
=\frac{1}{n(n+1)} \left( 1 + (n + 2) \sum_{k=1}^{n-1} E^k + E^n \right) F_n(t).
\end{eqnarray}
From definition of $P_{n,k}(t)$ and \eqref{e12}, we have
\begin{eqnarray}\label{e17}
&&\frac{(-1)^n}{n}\sum_{k=0}^{n-1}c_k(k-n+1-t)^{\overline{n}}\nonumber\\
&=&\frac{1}{n}\sum_{k=0}^{n-1}c_kP_{n,k}(t)\nonumber\\
&=& \frac{1}{n(n+1)}\sum_{k=0}^{n-1}c_k(E^k-E^{k+1})F_n(t)\nonumber\\
&=&\frac{1}{n(n+1)}(c_0+\sum_{k=1}^{n-1}(c_k-c_{k-1})E^k-c_{n-1}E^n)F_{n}(t)
\end{eqnarray}
From the definition of \(c_k\), we have $c_0=2^{n-1}-1$, $c_{n-1}=-(2^{n-1}-1)$, and
for $1\leq k\leq n-1$, $c_k-c_{k-1}=-\binom{n}{k}$ by computation. Substituting $c_k$ into \eqref{e17} yields
\begin{eqnarray}\label{e18}
\frac{(-1)^n}{n}\sum_{k=0}^{n-1}c_k(k-n+1-t)^{\overline{n}}=\frac{1}{n(n+1)}[(2^{n-1}-1)(1+E^n)-\sum_{k=1}^{n-1}\binom{n}{k}E^k)]F_n(t)
\end{eqnarray}
Combining  \eqref{e16}, \eqref{e18} with \eqref{e15},
$$Q_n(t)=\frac{1}{n(n+1)}
\left(
2^{n-1}(1+E^n)+\sum_{k=1}^{n-1}\left(n+2-\binom{n}{k}\right)E^k
\right)F_n(t),$$
the lemma follows.
\end{proof}

\begin{remark} If we denote the coefficient of $z^k$ in $h_n(z)$ by $c_{n,k}$, then
\(h_n(z)=\sum_{k=0}^{n} c_{n,k}z^k.\) Combining it with Lemma \ref{le:shift}, we have
\begin{eqnarray}\label{e19}
Q_n(t)=\sum_{k=0}^{n}c_{n,k}F_n(t-k).
\end{eqnarray}
where
\begin{eqnarray}\label{e20}
c_{n,0}=c_{n,n}=\frac{2^{n-1}}{n(n+1)},\qquad
c_{n,k}=\frac{n+2-\binom{n}{k}}{n(n+1)}\quad (1\le k\le n-1).
\end{eqnarray}
The expression for $Q_n$ in the form of \eqref{e19} will facilitate our proof for Lemma \ref{le3:7} and calculations in Section 4.
\end{remark}

\subsection{Preliminary theorems}
In this subsection, we introduce two theorems on the locations of roots of polynomials that will be used later in our proofs.

In \cite{LakatosLosonczi2004}, a necessary condition for the zeros of self-inversive polynomials to lie on the unit circle is provided. In this paper, we will employ the special case where the polynomial in question has real coefficients, namely the self-reciprocal case.

Let \(P(z)=\sum_{k=0}^{m}a_kz^k\)~$(a_m\neq 0)$ be a polynomial of degree \(m\) with real coefficients. Then \(P\) is called \emph{self-reciprocal} if its coefficients satisfy
\[
a_k=a_{m-k}\qquad\mbox{for all }k=0,1,\dots,m.
\]

\begin{theorem}[\cite{LakatosLosonczi2004}]
\label{thm:LL}
Let \(P(z)=\sum_{k=0}^{m}a_kz^k\) be a self-reciprocal polynomial of degree $m\geq 1$. If
\[
|a_m|\ge \frac12\sum_{k=1}^{m-1}|a_k|,
\]
then all roots of \(P\) lie on the unit circle.
\end{theorem}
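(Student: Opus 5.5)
The plan is a sign-change count on the unit circle. I will first prove the statement under the strict inequality $|a_m|>\frac12\sum_{k=1}^{m-1}|a_k|$, and then recover the equality case by a limiting argument. Since $a_0=a_m\neq 0$, the point $z=0$ is not a root. Because $P$ has degree $m$, it suffices to exhibit $m$ distinct roots on $|z|=1$.

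\emph{Step 1: reduce to a real trigonometric function.} For $z=e^{i\theta}$ put
\[
T(\theta)=e^{-im\theta/2}P(e^{i\theta})=\sum_{k=0}^{m}a_k e^{i(k-m/2)\theta}.
\]
Self-reciprocity $a_k=a_{m-k}$ lets us pair the terms $k$ and $m-k$. Their exponents are $\pm(k-m/2)\theta$, so each pair combines into a cosine. This gives
\[
T(\theta)=2a_m\cos\!\left(\tfrac{m\theta}{2}\right)+R(\theta),\qquad R(\theta)=\sum_{k=1}^{m-1}a_k\cos\!\left(\left(\tfrac m2-k\right)\theta\right).
\]
Thus $T$ is real-valued and continuous, and its zeros in $\theta$ are exactly the roots of $P$ on the unit circle. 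Moreover $|R(\theta)|\le\sum_{k=1}^{m-1}|a_k|<2|a_m|$.

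\emph{Step 2: count sign changes.} Take the points $\theta_j=2\pi j/m$ for $j=0,1,\dots,m$. At these points $\cos(m\theta_j/2)=(-1)^j$. Hence $T(\theta_j)$ is nonzero and has the sign of $(-1)^j a_m$, because the main term dominates $R$. By the intermediate value theorem, $T$ vanishes in each of the $m$ open intervals $(\theta_j,\theta_{j+1})$, for $j=0,\dots,m-1$.

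These intervals are pairwise disjoint and together lie in one period $[0,2\pi)$. The relation $T(\theta+2\pi)=(-1)^mT(\theta)$ does not affect where the zeros are. So we obtain $m$ distinct points $e^{i\theta}$ that are roots of $P$, and these are all of the roots.

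\emph{Step 3: the equality case.} This is the only delicate point, since $T(\theta_j)$ may now vanish and the sign argument breaks down. Instead I perturb $P$ to
\[
P_\varepsilon(z)=P(z)+\varepsilon a_m(1+z^m),\qquad \varepsilon>0.
\]
This polynomial is still self-reciprocal of degree $m$, and it satisfies the strict inequality. By Steps 1–2, all roots of $P_\varepsilon$ lie on $|z|=1$. As $\varepsilon\to0$, the roots of $P_\varepsilon$ converge to those of $P$ with multiplicity, by continuity of roots (Hurwitz's theorem), since the degree stays fixed. The unit circle is closed, so every root of $P$ lies on it.
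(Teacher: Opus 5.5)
Your proof is correct and complete. The paper does not prove this statement at all: it quotes it from Lakatos--Losonczi and uses it only as a black box in Proposition~\ref{pr3:6}, so there is no in-paper argument to compare against. What you give is the standard trigonometric sign-change argument behind results of this type.

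Compared with the bare citation, your route buys two things.
\begin{itemize}
\item It is self-contained and elementary. You pair $a_k$ with $a_{m-k}$ to get the real function $T(\theta)=2a_m\cos(m\theta/2)+R(\theta)$, then apply the intermediate value theorem between the $m+1$ points where $\cos(m\theta/2)=\pm1$.
\item Under the strict inequality it shows that the $m$ roots are simple. The strict case is exactly what the paper needs. In Proposition~\ref{pr3:6} one has $\frac12\sum_{k=1}^{n-1}|a_{n,k}|=\frac12(2^n-n^2-n+8)<2^{n-1}$ for $n\ge4$ (since $n^2+n>8$), and $2<4$ for $n=3$. So your Step~2 alone suffices there, and it also shows that $h_n$ has $n$ distinct roots on the circle.
\end{itemize}

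The cited theorem is stated more generally, for self-inversive polynomials with complex coefficients ($a_k=\omega\overline{a_{m-k}}$, $|\omega|=1$). Your argument extends to that case after multiplying $T$ by a unimodular $\eta$ with $\eta^2=\overline{\omega}$; this only shifts the phase of the leading cosine.

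In Step~3 you could also avoid Hurwitz's theorem. If $P(z_0)=0$ with $|z_0|\ne1$, then
\[
\varepsilon|a_m|\,|1+z_0^m|=|P_\varepsilon(z_0)|\ge(1+\varepsilon)|a_m|\,\bigl||z_0|-1\bigr|^m .
\]
The left side tends to $0$ as $\varepsilon\to0$ while the right side stays bounded away from $0$, which is a contradiction.
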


Another theorem to be used is a simplified version of
a theorem due to Rodriguez Villegas, which states that certain polynomials have all of their roots on a vertical line.

\begin{theorem}[\cite{RodriguezVillegas2002}]
\label{thm:RV}
Let \(U\in\mathbb{C}[z]\) be a  polynomial of degree \(e\), \(U(1)\ne 0\), and let $d\in \mathbb{N}$ with \(d\geq e+2\). Define the polynomial \(R\) by
\[
\frac{U(z)}{(1-z)^d}=\sum_{m\ge 0}R(m)z^m.
\]
If all the roots of \(U\) are on the unit circle,
then $$R(x)=(x+1)\cdots(x+d-e-1)V(x),$$ where every root of \(V\) lies on the vertical line
\[
\Re  (x)=-\frac{d-e}{2}.
\]
\end{theorem}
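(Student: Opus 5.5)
The plan is to induct on the degree $e$ of $U$, peeling off one root of $U$ at a time. Each root will act on $R$ as a shift operator, exactly as in Lemma~\ref{le:shift}. Throughout, write $s=d-e\ge 2$ and let $E$ be the backward shift $Ef(x)=f(x-1)$.

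\textbf{Structure of $R$.} Writing $U(z)=\sum_{k=0}^e u_kz^k$ and expanding $(1-z)^{-d}$ gives
\[
R(x)=\sum_{k=0}^{e}u_k\binom{x-k+d-1}{d-1}=U(E)\,g_d(x),\qquad g_d(x)=\frac{(x+1)(x+2)\cdots(x+d-1)}{(d-1)!}.
\]
For each $0\le k\le e$ the product $(x-k+1)\cdots(x-k+d-1)$ contains the block $(x+1)\cdots(x+s-1)$. This yields the factorization $R=(x+1)\cdots(x+s-1)V$ with $\deg V\le e$, and it remains to locate the roots of $V$.

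\textbf{Base case and inductive step.} For $e=0$, $V$ is a nonzero constant and there is nothing to prove. For $e\ge1$, factor $U(z)=(z-\zeta)U_1(z)$ with $|\zeta|=1$. Then $U_1$ has degree $e-1$ with all roots on the unit circle, and $d\ge (e-1)+2$ still holds. The condition $U_1(1)\neq0$ follows from $U(1)\neq0$. By the inductive hypothesis applied to $U_1$ with the same $d$ (so $s$ is replaced by $s+1$),
\[
R_1=U_1(E)g_d=(x+1)\cdots(x+s)\,V_1(x),
\]
where every root of $V_1$ lies on $\Re x=-\tfrac{s+1}{2}$. Since $R=(E-\zeta)R_1$, we get
\[
R(x)=R_1(x-1)-\zeta R_1(x)=(x+1)\cdots(x+s-1)\bigl[x\,V_1(x-1)-\zeta\,(x+s)\,V_1(x)\bigr].
\]
This identifies $V(x)=x\,V_1(x-1)-\zeta(x+s)V_1(x)$, and it is nonzero because $R\ne0$.

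\textbf{Key estimate.} The step I expect to need the most care is showing that $V$ has no zeros off the line $\Re x=-s/2$; I would use a modulus comparison in the spirit of P\'olya. Suppose $\Re x>-s/2$.
\begin{itemize}
\item For any root $r$ of $V_1$ we have $\Re(x-r)>\tfrac12$, so $|x-r|>|x-1-r|$. Multiplying over all roots gives $|V_1(x)|>|V_1(x-1)|$.
\item Also $|x+s|>|x|$, since $x$ lies strictly to the right of the midpoint $-s/2$ of $0$ and $-s$.
\end{itemize}
Hence $|\zeta(x+s)V_1(x)|>|xV_1(x-1)|$, where we use $|\zeta|=1$ and $V_1(x)\neq0$ because $x$ is off the line $\Re x=-\tfrac{s+1}{2}$. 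So $V(x)\ne0$. The case $\Re x<-s/2$ is symmetric: both inequalities reverse, now strictly in the other direction.

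Therefore all roots of $V$ lie on $\Re x=-\tfrac{s}{2}=-\tfrac{d-e}{2}$, which completes the induction. The only points to verify carefully are the bookkeeping of the common factor $(x+1)\cdots(x+s)$ and the use of $U(1)\ne0$ to keep the hypotheses intact when passing to $U_1$.
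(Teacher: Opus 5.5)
Your proof is correct. It is not a route the paper takes, because the paper does not prove Theorem~\ref{thm:RV} at all: it imports the result from Rodriguez Villegas and uses it as a black box in Lemma~\ref{le3:7}.

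Your argument has three steps. You write $R=U(E)g_d$, peel off one root $\zeta$ so that $R=(E-\zeta)R_1$, and note that $xV_1(x-1)$ and $(x+s)V_1(x)$ are mirror images in modulus across the line $\Re x=-s/2$. This replaces the citation with a short, self-contained proof that fits the paper's own shift-operator formalism. With $d=n+2$ one has $F_n(t)=(n+1)!\,g_{n+2}(t-1)$, so your identity $R=U(E)g_d$ is exactly Claim~1 in the proof of Lemma~\ref{le3:7}.

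Your proof also shows the stated hypotheses are stronger than needed:
\begin{itemize}
\item The modulus estimate uses only $s=d-e>0$, via $|x+s|>|x|$, so $d\ge e+1$ already suffices.
\item The estimate never uses $\zeta\neq 1$. So $U(1)\neq 0$ is not needed for the location of the roots; it only guarantees $\deg V=e$, since the leading coefficient of $R$ is $U(1)/(d-1)!$.
\end{itemize}
The citation buys brevity; your version buys transparency and sharper hypotheses.

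One small repair. When $V_1$ is a nonzero constant (the step $e=1$), the product over roots is empty, so you only get $|V_1(x)|=|V_1(x-1)|$, not a strict inequality. Strictness must then come from $|x+s|>|x|$ together with $V_1(x)\neq 0$, which you do record. In the mirrored case $\Re x<-s/2$ you likewise need $V_1(x-1)\neq 0$. This holds because $\Re(x-1)<-\tfrac{s+1}{2}$, so $x-1$ is off the line carrying the roots of $V_1$.
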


\subsection{The proof of  Theorem \ref{thm:main}}
In this subsection, we apply Theorems \ref{thm:LL} and \ref{thm:RV}, respectively, to determine the locations of the roots of $h_n(z)$ (defined in Lemma \ref{le:shift}) and
$Q_n(t)$. Then, we prove that all roots of
$\Gamma_{W_n}(x)$ are negative real numbers.

\begin{proposition}\label{pr3:6}
For every $n\geq 3$, all roots of $h_{n}(z)$ are on the unit circle.
\end{proposition}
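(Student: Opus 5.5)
The plan is to apply Theorem \ref{thm:LL} directly to the polynomial $P(z)=n(n+1)h_n(z)$. Multiplying by a positive constant does not change the roots, and $P$ has integer coefficients given by \eqref{e20}:
\[
a_0=a_n=2^{n-1},\qquad a_k=n+2-\binom{n}{k}\quad (1\le k\le n-1).
\]

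\textbf{Step 1: $P$ is self-reciprocal of degree $n$.} Since $\binom{n}{k}=\binom{n}{n-k}$, we have $a_k=a_{n-k}$ for all $k$. Also $a_n=2^{n-1}\neq 0$, so $P$ has degree $n\ge 3$.

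\textbf{Step 2: the coefficient inequality of Theorem \ref{thm:LL}.} We must show
\[
2^{n-1}\ \ge\ \tfrac12\sum_{k=1}^{n-1}|a_k|.
\]
Split the sum into the end terms and the middle terms.

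For $k=1$ and $k=n-1$, $a_k=n+2-n=2$, so these contribute $4$.

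For $2\le k\le n-2$ (a nonempty range only when $n\ge4$), we have $\binom{n}{k}\ge\binom{n}{2}=n(n-1)/2$. This is at least $n+2$ because $n^2-3n-4=(n-4)(n+1)\ge0$ for $n\ge4$. Hence $|a_k|=\binom{n}{k}-(n+2)$ for these $k$.

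Using $\sum_{k=2}^{n-2}\binom{n}{k}=2^n-2-2n$, the total is
\[
\sum_{k=1}^{n-1}|a_k| = 4+\bigl(2^n-2-2n\bigr)-(n-3)(n+2).
\]
Therefore
\[
\tfrac12\sum_{k=1}^{n-1}|a_k| = 2^{n-1}+1-n-\tfrac{(n-3)(n+2)}{2}.
\]
This is at most $2^{n-1}$, because $1-n\le -2<0$ and $(n-3)(n+2)\ge 0$ for $n\ge3$.

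The case $n=3$ is checked separately, since the middle range is empty. Here $a_1=a_2=2$, so $\tfrac12(2+2)=2\le 4=2^{2}$.

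\textbf{Step 3: conclusion.} Theorem \ref{thm:LL} now gives that every root of $P$, and hence of $h_n$, lies on the unit circle.

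The only real obstacle is getting the signs of $n+2-\binom{n}{k}$ right, so that the absolute values can be summed in closed form. The argument handles this by splitting off $k=1,n-1$ and treating $n=3$ separately. After that, the inequality holds with a comfortable margin of order $n^2$.
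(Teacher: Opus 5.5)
Your proof is correct and follows the paper's argument essentially step for step. You apply Theorem~\ref{thm:LL} to $n(n+1)h_n(z)$, use $\binom{n}{k}=\binom{n}{n-k}$ for self-reciprocity, and check $n=3$ directly. For $n\ge 4$ you use $a_1=a_{n-1}=2$ and $a_k\le 0$ for $2\le k\le n-2$, which gives the closed form $\sum_{k=1}^{n-1}|a_k|=2^n-n^2-n+8\le 2^n$. Your explicit identity $\binom{n}{2}-(n+2)=\tfrac12(n-4)(n+1)$ simply fills in a step the paper leaves implicit.
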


\begin{proof}
Set $H_n(z)=n(n+1)h_{n}(z)$, then
$$H_{n}(z)=2^{n-1}(1+z^n)+\sum_{k=1}^{n-1}\left(n+2-\binom{n}{k}\right)z^k,$$ and the roots of $H_n(z)$ coincide with those of $h_{n}(z)$.
Because $\binom{n}{k}=\binom{n}{n-k}$, the function $H_n(z)$ is a self-reciprocal polynomial of degree $n$. Let $a_{n,k}=n+2-\binom{n}{k}$ for $1\leq k\leq n-1$, and $a_{n,n}=2^{n-1}$. From Theorem \ref{thm:LL}, it only remains to prove $ \frac12\sum_{k=1}^{n-1}|a_{n,k}|\leq 2^{n-1}$ for each $n\geq 3$, completing the proof.

When \(n=3\),
$$ \frac12\sum_{k=1}^{2}|a_{3,k}|=2\leq 4=2^{3-1}$$ holds. We now assume $n\geq 4$. For $k=1$ or $k=n-1$, we have $a_{n,1}=a_{n,n-1}=2$. For \(2\le k\le n-2\),
\(\binom{n}{k}\ge \binom{n}{2}\ge n+2\), so $a_{n,k}\leq 0$. It follows that
\[
\sum_{k=1}^{n-1}|a_{n,k}|
=4+\sum_{k=2}^{n-2}\left(\binom{n}{k}-(n+2)\right)
=2^n-n^2-n+8\le 2^n.
\]
Thus
\[
\frac12\sum_{k=1}^{n-1}|a_{n,k}|\le 2^{n-1}
\]
holds, the proposition follows.
\end{proof}

\begin{lemma}\label{le3:7}
For every $n\geq 3$, all roots of $Q_{n}(t)$ are on the imaginary axis.
\end{lemma}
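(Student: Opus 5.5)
We will feed $h_n$ into Theorem~\ref{thm:RV}. By \eqref{e19}, $Q_n(t)=\sum_{k=0}^{n}c_{n,k}F_n(t-k)$ with $F_n(t)=t^{\overline{n+1}}$. The task is to recognize this as the coefficient polynomial $R$ attached to a rational function $U(z)/(1-z)^d$, with $U$ essentially $h_n$. The basic identity we use is
\[
\frac{1}{(1-z)^{n+2}}=\sum_{m\ge0}\binom{m+n+1}{n+1}z^m ,
\qquad \binom{m+n+1}{n+1}=\frac{(m+1)(m+2)\cdots(m+n+1)}{(n+1)!}=\frac{F_n(m+1)}{(n+1)!}.
\]
Multiplying by $h_n(z)=\sum_k c_{n,k}z^k$ gives
\[
\frac{h_n(z)}{(1-z)^{n+2}}=\sum_{m\ge0}R(m)z^m,\qquad
R(m)=\frac{1}{(n+1)!}\sum_{k=0}^{n}c_{n,k}F_n(m+1-k).
\]
For $m\ge n$ no truncation occurs. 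For $0\le m<k$ the terms with $k>m$ should be absent, and indeed they vanish automatically, because $F_n(m+1-k)=0$ when $-n\le m+1-k\le 0$. Hence the identity holds for all $m\ge0$ with a polynomial $R$. Comparing with \eqref{e19} yields
\[
R(x)=\frac{1}{(n+1)!}\,Q_n(x+1).
\]

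Next we verify the hypotheses of Theorem~\ref{thm:RV} with $U=h_n$, $e=n$ and $d=n+2=e+2$. The leading coefficient is $c_{n,n}=2^{n-1}/(n(n+1))\neq0$, so $\deg h_n=n$. By Proposition~\ref{pr3:6}, all roots of $h_n$ lie on the unit circle. We must also check $U(1)\ne0$:
\[
n(n+1)h_n(1)=2^n+(n-1)(n+2)-(2^n-2)=(n-1)(n+2)+2>0 .
\]

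Theorem~\ref{thm:RV} then gives $R(x)=(x+1)\cdots(x+d-e-1)V(x)=(x+1)V(x)$, where every root of $V$ lies on $\Re(x)=-1$. The extra root $x=-1$ also lies on this line. Therefore every root of $R$ has real part $-1$, and every root of $Q_n(t)=(n+1)!\,R(t-1)$ has real part $0$, which proves the lemma. As a sanity check, $Q_n(t)=\tfrac12 t^{n+1}\Gamma_{W_n}(t^{-2})$ is a polynomial of degree $n+1$ whose parity makes it odd or even, consistent with a root set symmetric about the imaginary axis.

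The main obstacle is the identification step: getting the index shift right (here $Q_n(x+1)$ rather than $Q_n(x)$) and justifying that the finite-$m$ truncation is harmless. The justification is precisely the vanishing of $F_n$ at $0,-1,\dots,-n$. A careless shift would put the line at $\Re(t)=\pm1$ instead of $0$, so I would double-check it on $n=3$ against $\Gamma_{W_3}(x)=2+14x$. There $Q_3(t)=t^4+7t^2$, whose roots $0,0,\pm i\sqrt7$ are indeed on the imaginary axis.
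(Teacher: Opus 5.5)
Your proposal is correct and follows the paper's proof essentially step for step: you apply Theorem~\ref{thm:RV} to $h_n$ with $d=n+2$, $e=n$, using Proposition~\ref{pr3:6} and $h_n(1)\neq 0$ (your value $(n-1)(n+2)+2=n(n+1)$ gives $h_n(1)=1$, as in the paper), and you identify $Q_n(t)=(n+1)!\,R(t-1)$ by expanding $(1-z)^{-(n+2)}$. Your explicit check that the terms with $k>m$ vanish automatically, because $F_n$ vanishes at $0,-1,\dots,-n$, fills in a detail that the paper's Claim~1 leaves implicit.
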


\begin{proof}
We define $R_n(m)$ by
\[
\frac{h_n(z)}{(1-z)^{n+2}}=\sum_{m\ge 0}R_n(m)z^m.
\]
For every $n\geq 3$, a direct computation yields $h_n(1)=1\ne 0$. From Proposition
\ref{pr3:6} and Theorem \ref{thm:RV}, we have
$$R_n(m)=(m+1)V_{n}(m)$$
where every root of \(V_n\) lies on the vertical line
\(\Re  (m)=-1.\) Hence all roots of $R_n$ are on the vertical line
\[
\Re  (m)=-1.
\]
\textbf{Claim 1.} For every $n\geq 3$, $Q_n(t)=(n+1)!R_n(t-1)$.

Since the coefficient of $z^{m-k}$ in $1/(1-z)^{n+2}$ is $\binom{m-k+n+1}{n+1}$, and the coefficient of $z^k$ in $h_n(z)$ is $c_{n,k}$ as shown in \eqref{e20}, the coefficient of $z^m$ in
$h_n(z)/(1-z)^{n+2}$ is
\begin{eqnarray*}
R_n(m) & = & \sum_{k=0}^n c_{n,k} \binom{m-k+n+1}{n+1} \\
& = & \frac{1}{(n+1)!} \sum_{k=0}^n c_{n,k} F_n(m-k+1) \\
& = & \frac{1}{(n+1)!} Q_n(m+1),
\end{eqnarray*}
The claim follows.

Consequently, every root of \(Q_n\) satisfies \(\Re(t-1)=-1\), i.e. \(\Re t=0\). Thus all roots of \(Q_n\) are purely imaginary. The proof is complete.
\end{proof}

We are now ready to prove Theorem \ref{thm:main}.

\begin{proof}[\bf{Proof of Theorem \ref{thm:main}}]
Let \(\lambda\) be any root of \(\Gamma_{W_n}(x)\).
From \eqref{e7}, \(g_0(W_n)=2\), so we have \(\lambda\ne0\).
Choose \(t\ne0\) such that \(t^{-2}=\lambda\). Then
\[
Q_n(t)=\frac12 t^{n+1}\Gamma_{W_n}(t^{-2})=0.
\]
From Lemma \ref{le3:7}, \(t\) is purely imaginary, i.e., \(t=iy\) for some nonzero
\(y\in\mathbb{R}\). Therefore
\[
\lambda=(iy)^{-2}=-\frac1{y^2}<0.
\]
Thus every root of \(\Gamma_{W_n}(x)\) is a negative real number.

 Since \(\Gamma_{W_n}(x)\) is a polynomial with nonnegative coefficients and all real negative roots, Newton's inequalities imply that the coefficient sequence of \(\Gamma_{W_n}(x)\)  is log-concave. A log-concave sequence with no internal zeros is unimodal, so the genus distribution is unimodal. The proof is complete.
\end{proof}

\section{The Asymptotic Normality of Genus Distributions of Wheels}\label{Sec:4}
In this section, we compute the variance of the genus distribution of  \(W_n\). Subsequently, by applying a theorem of
Bender \cite{Ben73}, Theorem \ref{thm:normal} is established.

Choose an embedding uniformly at random among all orientable embeddings of a graph $G$, and let \(\gamma_n\) denote its genus. Then {\it the average genus} of $G$ is
 $$\gamma_{avg}(G)
=\E[{\gamma_n}]=\frac{\Gamma'_G(1)}{\Gamma_G(1)} = \sum_{k=0}^{\infty} \frac{k \cdot g_k(G)}{\Gamma_G(1)},$$
and the {\it variance} of the genus distribution of $G$ is
\[
\gamma_{var}(G) =\Var(\gamma_n)= \sum_{k=0}^{\infty} (k - \gamma_{avg}(G))^2 \cdot \frac{g_k(G)}{\Gamma_G(1)}.
\]
The following theorem gives asymptotic formulae for $\gamma_{avg}(W_n)$ and $\gamma_{var}(W_n)$.

\begin{theorem}
\label{thm:average}
As \(n\to\infty\),
\[
\gamma_{avg}(W_n)\sim \frac{n}{2},~~~~~\mbox{and}~~~~~\gamma_{var}(W_n)\sim \frac14\ln n.
\]
\end{theorem}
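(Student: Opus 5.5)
The plan is to compute the first two moments exactly from the shift-operator representation \eqref{e19}, rather than from the explicit formula in Theorem~\ref{thm:distribution}. I work with the auxiliary function $Q_n(t)=\frac12 t^{n+1}\Gamma_{W_n}(t^{-2})$ at $t=1$ (which corresponds to $x=1$) and use \eqref{e19}, $Q_n(t)=\sum_{k=0}^n c_{n,k}F_n(t-k)$. The key observation is that $F_n(t-k)=\prod_{j=0}^n(t-k+j)$ vanishes at $t=1$ for every $1\le k\le n$, namely through the factor with $j=k-1$. Hence $Q_n(1)=c_{n,0}(n+1)!$, which gives $\Gamma_{W_n}(1)=2^n(n-1)!$; this checks against $\Gamma_{W_3}(1)=16$. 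The terms with $k\ge1$ contribute only to derivatives, and only with small weights.

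\textbf{Change of variables.} Set $t=e^u$, so that $x=t^{-2}=e^{-2u}$ and $\frac{d}{d\ln x}=-\frac12\frac{d}{du}$. Then $\ln\Gamma_{W_n}(x)=\ln 2+\ln Q_n(e^u)-(n+1)u$. The mean and variance of the genus are the first and second derivatives of $\ln\Gamma_{W_n}(e^s)$ with respect to $s=\ln x$ at $s=0$. This gives
\[
\gamma_{avg}(W_n)=\tfrac12\Bigl(n+1-\tfrac{Q_n'(1)}{Q_n(1)}\Bigr),\qquad \gamma_{var}(W_n)=\tfrac14\Bigl(\tfrac{Q_n'(1)}{Q_n(1)}+\tfrac{Q_n''(1)}{Q_n(1)}-\tfrac{Q_n'(1)^2}{Q_n(1)^2}\Bigr).
\]

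\textbf{Main term.} Take only the $k=0$ summand, $c_{n,0}F_n(t)$. Its logarithmic derivatives at $t=1$ are
\[
\sum_{j=0}^n\frac1{1+j}=H_{n+1}\quad\text{and}\quad -\sum_{j=0}^n\frac1{(1+j)^2}=-H^{(2)}_{n+1}.
\]
The $k=0$ contribution to the mean is therefore $\frac12(n+1-H_{n+1})\sim n/2$. Its contribution to the variance is $\frac14(H_{n+1}-H^{(2)}_{n+1})\sim\frac14\ln n$.

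\textbf{Error terms.} For $1\le k\le n$, the summand $F_n(t-k)$ has a simple zero at $t=1$. Its first derivative there is $(-1)^{k-1}(k-1)!\,(n-k+1)!$. Its second derivative is $2(-1)^{k-1}(k-1)!\,(n-k+1)!$ times a sum of reciprocals of the other factors, which is $O(\ln n)$ in absolute value. I then divide by $Q_n(1)=c_{n,0}(n+1)!$ and use \eqref{e20}.

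\begin{itemize}
\item For $1\le k\le n-1$, we have $|c_{n,k}/c_{n,0}|\le\bigl(\binom nk+n+2\bigr)/2^{n-1}$. Also $\binom nk(k-1)!\,(n-k+1)!/(n+1)!=\frac{n-k+1}{k(n+1)}\le1$, and the $(n+2)$-part is even smaller. So these terms contribute at most $O(n\ln n/2^{n})$ in total.
\item For $k=n$, the ratio is $c_{n,n}/c_{n,0}=1$, and the weight is $(n-1)!/(n+1)!=O(n^{-2})$, giving a contribution $O(\ln n/n^2)$.
\end{itemize}

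Hence $Q_n'(1)/Q_n(1)=H_{n+1}+o(1)$ and $Q_n''(1)/Q_n(1)=(\ln F_n)''(1)+(\ln F_n)'(1)^2+o(1)$. Substituting these into the two formulas gives $\gamma_{avg}(W_n)\sim n/2$ and $\gamma_{var}(W_n)=\frac14\ln n+O(1)$.

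The main obstacle is this error-term bookkeeping, in particular making sure that the cross term $Q_n'(1)^2/Q_n(1)^2$ cancels correctly against $Q_n''(1)/Q_n(1)$. The errors are $o(1)$, so they do not affect the $\frac14\ln n$ leading order. The rest of the computation is routine.
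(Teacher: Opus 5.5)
Your proposal is correct and is essentially the paper's own argument. The paper introduces $Y_n=n+1-2\gamma_n$, whose probability generating function $Q_n(t)/Q_n(1)$ yields exactly your two moment formulas; then, just as you do, it evaluates \eqref{e19} at $t=1$ using $F_n(1-k)=0$ for $1\le k\le n$, takes $H_{n+1}$ and $H_{n+1}^2-H_{n+1}^{(2)}$ from the $k=0$ term, and shows the remaining terms are exponentially small apart from an $O(\ln n/n^2)$ contribution from $k=n$, arriving at $\frac14\bigl(H_{n+1}-H_{n+1}^{(2)}\bigr)+O(\ln n/n^2)$. One harmless slip: the $(n+2)$-part of your weight, $(n+2)/\bigl(k\binom{n+1}{k}\bigr)$, is not smaller than the binomial part when $k\in\{1,n-1\}$ (at $k=1$ it equals $(n+2)/(n+1)$), but it never exceeds $(n+2)/(n+1)$, so your $O(n\ln n/2^{n})$ bound is unaffected.
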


\begin{proof} Let \(Y_n=n+1-2\gamma_n\). Then we have
\begin{eqnarray}\label{e21}
\E[\gamma_n]=\frac12\left(n+1-\E[Y_n]\right) ~~~~\mbox{and}~~~~\Var(\gamma_n)=\frac14\Var(Y_n).\end{eqnarray}
One can check that $Y_n$ has the probability generating function
\[
\E[t^{Y_n}]=\frac{Q_n(t)}{Q_n(1)}.
\]
Therefore,
\begin{eqnarray}\label{e22}\E[Y_n]=\frac{Q_n'(1)}{Q_n(1)},~~~\mbox{and}~~~
\Var(Y_n)=\frac{Q_n''(1)}{Q_n(1)}
+\frac{Q_n'(1)}{Q_n(1)}
-\left(\frac{Q_n'(1)}{Q_n(1)}\right)^2.\end{eqnarray}
Now we compute $Q_n'(1)/Q_n(1)$ and $Q_n''(1)/Q_n(1)$. We use the expression for $Q_n$ in the form of \eqref{e19}, i.e., \[
Q_n(t)=\sum_{k=0}^{n}c_{n,k}F_n(t-k),
\] in which $c_{n,k}$ is shown in \eqref{e20}.  Let \(H_m=\sum_{j=1}^m1/j\), \(H_m^{(2)}=\sum_{j=1}^m1/j^2\), and \(H_0=0\).

Since \(F_n(1)=(n+1)!\)  and \(F_n(1-k)=0\)  for \(1\le k\le n\), we get
\[
Q_n(1)=c_{n,0}F_n(1)=\frac{2^{n-1}}{n(n+1)}(n+1)!=2^{n-1}(n-1)!.
\]
Then, we compute the derivative. The logarithmic derivative of \(F_n\) gives
\[
F_n'(1)=(n+1)!H_{n+1},
\]
and for \(1\le k\le n\),
\[
F_n'(1-k)=(-1)^{k-1}(k-1)!(n-k+1)!.
\]
Since
\[
Q_n'(1)=\sum_{k=0}^n c_{n,k}F_n'(1-k),
\]
we obtain
\begin{eqnarray}\label{e23}
\frac{Q_n'(1)}{Q_n(1)}
=
H_{n+1}
+
\frac{1}{2^{n-1}}
\sum_{k=1}^{n-1}
\frac{(-1)^{k-1}\left(n+2-\binom{n}{k}\right)}
{k\binom{n+1}{k}}
+
\frac{(-1)^{n-1}}{n(n+1)}.
\end{eqnarray}

Next, computing the second derivative yields
\[
F_n''(1)=(n+1)!\left(H_{n+1}^2-H_{n+1}^{(2)}\right),
\]
and for \(1\le k\le n\),
\[
F_n''(1-k)=2(-1)^{k-1}(k-1)!(n-k+1)!\left(H_{n-k+1}-H_{k-1}\right).
\]
Since
\[
Q_n''(1)=\sum_{k=0}^n c_{n,k}F_n''(1-k),
\] we obtain
\begin{eqnarray}\label{e24}
\frac{Q_n''(1)}{Q_n(1)}
&=&H_{n+1}^2-H_{n+1}^{(2)}+\frac{2(-1)^{n-1}}{n(n+1)}(1-H_{n-1})\nonumber\\
&&+\frac{1}{2^{n-2}}
\sum_{k=1}^{n-1}
\frac{(-1)^{k-1}\left(n+2-\binom{n}{k}\right)
\left(H_{n-k+1}-H_{k-1}\right)}
{k\binom{n+1}{k}}
\end{eqnarray}

In the following, we compute $\E[\gamma_n]$ and $\Var(\gamma_n)$ and carry out the asymptotics.
Combining \eqref{e21}, \eqref{e22} with \eqref{e23}, we have
\begin{eqnarray}\label{e25}
\E[{\gamma_n}]=
\frac{n+1-H_{n+1}}{2}
-\frac{1}{2^n}
\sum_{k=1}^{n-1}
\frac{(-1)^{k-1}\left(n+2-\binom{n}{k}\right)}
{k\binom{n+1}{k}}
+\frac{(-1)^n}{2n(n+1)},
\end{eqnarray}
In the first term $H_n\sim \ln n$, the second term which is the finite sum multiplied by $-2^{-n}$ is
$O(n/2^{n})$, and the last term is $O(1/n^{2})$. Hence, $\gamma_{avg}(W_n)\sim n/2$ follows.

Substituting \eqref{e23} and \eqref{e24} into the variance formulae in \eqref{e21} and \eqref{e22}, and noting that the finite sum in \eqref{e24} is exponentially small up to polynomial factors, gives
\[
\Var(\gamma_n)
=
\frac14\left(H_{n+1}-H_{n+1}^{(2)}\right)
+O\!\left(\frac{\log n}{n^2}\right).
\]
Using the asymptotics
\[
H_{n+1}\sim \ln n,\qquad
H_{n+1}^{(2)}\sim \frac{\pi^2}{6},
\]
we obtain $\Var(\gamma_n)\sim (\ln n)/4$. The proof is complete.
\end{proof}

\begin{proof}[\bf{Proof of Theorem \ref{thm:normal}}]
Let \(P_n(x)=\Gamma_{W_n}(x)/\Gamma_{W_n}(1)\) be the probability generating function of \(\gamma_n\). By Theorem 1.5, \(\Gamma_{W_n}\)(x) has only negative real roots; hence \(P_n(x)\) is real-rooted with all roots non-positive. By Theorem~\ref{thm:average},
\(
\mathrm{Var}(\gamma_n)\sim \frac14\ln n\to\infty.
\)
In Theorem 2 of \cite{Ben73}, Bender asserts that if a sequence of generating polynomials is real-rooted with all roots non-positive and has variance tending to infinity, then the corresponding normalized random variables converge in distribution to \(\mathcal{N}(0,1)\). Thus, \[
\frac{\gamma_n-\mathbb{E}[\gamma_n]}{\sqrt{\mathrm{Var}(\gamma_n)}}\xrightarrow{d}\mathcal{N}(0,1), \qquad\text{as } n\to\infty.
\]
Applying Theorem~\ref{thm:average} yields the asymptotics \(\mathbb{E}[\gamma_n]\sim\frac{n}{2}\) and \(\mathrm{Var}(\gamma_n)\sim\frac14\ln n\), the theorem follows. \end{proof}

\end{document}